\DeclareMathOperator{\Pic}{Pic}
\DeclareMathOperator{\rank}{rank}
\DeclareMathOperator{\sgn}{sgn}
\DeclareMathOperator{\Td}{Td}
\let\bb=\mathbb
\let\rar=\rightarrow
\let\s=\mathcal
\let\onto=\twoheadrightarrow
\let\wt=\widetilde
\theoremstyle{plain}
 \newtheorem{thm}{Theorem}[section] 
 \newtheorem{cor}[thm]{Corollary}
 \newtheorem{lem}[thm]{Lemma} 
 \newtheorem{prp}[thm]{Proposition}
\newtheoremstyle{sbs}
  {\smallskipamount}
  {\smallskipamount}
  {\it}
  {\parindent}
  {\rm}
  {.}
  {.5em}
  {}
\theoremstyle{sbs}
\theoremstyle{definition}
 \newtheorem{dfn}[thm]{Definition}
 \newtheorem{eg}[thm]{Example}
 \newtheorem{rmk}[thm]{Remark}
\def\mylistparam
\begin{document}

\title
  {Brill-Noether-type Theorems with a Movable Ramification Point} 

\author[R. C. Lehman]{Rebecca C. Lehman}
 \address
 {Mathematics Department, Tulane University\\
 6823 St. Charles Ave.\\
 New Orleans, LA 70118, USA}
 \email{rlehman@tulane.edu}

\begin{abstract}
The classical Brill-Noether theorems count the dimension of the family of maps from a general curve of genus $g$ to non-degenerate curves of degree $d$ in projective space $\bb{P}^r.$  These theorems can be extended to include ramification conditions at fixed general points.  This paper deals with the problem of imposing a ramification condition at an unspecified point.  We solve the problem completely in dimensions $1$ and $2,$ and provide an existence test and bound the dimension of the family in the general case.
\end{abstract}

\maketitle

\section{Introduction}

In their seminal paper on algebraic functions and their geometric applications (\cite{B-N}, 1879), Brill and Noether calculated the expected dimension $\rho$ of the family of maps from a general curve of genus $g$ to $\bb{P}^r.$  However, they did not prove that the family has dimension exactly $\rho,$ or even that any such maps exist.  The existence theorem was first proved with twentieth-century rigor by Kleiman and Laksov (\cite{KL-S}, 1972; \cite{KL-C}, 1974) and independently by Kempf (\cite{Kf}, 1971).  The non-existence and dimensionality results were proved by Griffiths and Harris (\cite{GH-BN}, 1980) and refined by Eisenbud and Harris (\cite{EH-LS}, 1986, \cite{EH-BN}, 1986).  Griffiths, Harris and Eisenbud's proofs extend almost verbatim to the case when one imposes in addition the condition that the linear system must have a specified type of ramification at a general fixed point $P$ of the curve.  But this raises the more basic question of whether a $g^r_d$ exists with the specified ramification at any point at all.  In particular, let $X$ be a general curve of genus $g,$ and let positive integers $r,$ $d$ and $(m_0 ,\hdots , m_r)$ be given.  Does there exist a $g^r_d$ on $X$ possessing vanishing sequence $(m_0, \hdots, m_r)$ at any point $Q$?  If so, what is the dimension of the set of such pairs $(\s{L}, Q)$?  If the dimension is zero, how many actual pairs are there?  

Based on the classical Brill-Noether theorems and the theorems for fixed general ramification points, one might be led to make the na\"ive conjecture that the family of $g^r_d$'s with a given ramification type exists if and only if the expected dimension is greater than or equal to zero, and has the expected dimension.  This paper will show that the na"ive statement holds perfectly when $r=1.$  When $r \geq 2$ the existence can fail, but it can be decided by an explicit numerical criterion (Theorem \ref{Wrd}).  The dimension is exact when $r=1$ or $r=2.$  For $r\geq 2$ we prove a weak general bound on the dimension.

Section \ref{Prel} will provide some preliminary definitions and notation for reference.  Section \ref{GP} contains a proof of the Existence Criterion, by calculating the intersection class of the family of such maps in terms of Theta classes on the Picard variety.  If the intersection class is nonzero, then the locus must be non-empty.  The family $W^r_d$ is a degeneracy locus of a morphism of filtered vector bundles on the product $\Pic^d_C \times C,$ and its class can be expanded as a sum of determinants.  In Section \ref{LLS} we prove the dimensionality theorem for $r=1$ (Theorem \ref{r1}) and the weak general bound (Theorem \ref{WkBd}).  We can degenerate any linear systems on the general curve to limit linear systems on a special reducible curve $X_0,$ which will be a flag curve consisting only of rational and elliptic components.  The inequalities that define limit linear series will allow us to determine the possible limit linear series explicitly on each component and bound their dimension.  By the upper semicontinuity property, the dimension on the limit $X_0$ is greater than or equal to the dimension on the general curve, so we obtain upper bounds on the general curve.  Section \ref{PC} uses the extrinsic geometry of curves in $\bb{P}^2$ with fixed points to complete the dimensionality proof for $r=2$ (Theorem \ref{r2}).  All degree $d$ curves form a projective space of dimension $n=\frac{1}{2}d(d+3).$  All point conditions cut out linear subspaces of this projective space, so up to $n$ of them always impose independent conditions.  We shall see that when $\rho<0,$ the dimension of the subscheme of plane curves of degree $d$ and genus $g$ that satisfy the given ramification condition is strictly less than the number of degrees of freedom allowed by the moduli space of curves of genus $g,$ the automorphisms of $\bb{P}^2,$ and the ramification point and the nodes.  Hence the general curve does not admit a map to such a plane curve.

While the nonexistence proof for $r=2$ appears ad hoc, it does demonstrate that there are ramified limit linear systems on the flag curve that do not extend to linear systems on the general curve.  The flag curve is not ''sufficiently general'' with respect to Brill-Noether conditions with moving ramification points, although it is ''sufficiently general'' to detect such conditions without ramification or with only fixed ramification.  So the failure of the degeneration proof should not be construed as strong evidence against the dimensionality conjecture in general. 

\section{Preliminaries}\label{Prel}

We begin with a smooth, connected, projective curve $C$ of genus $g$ over the complex numbers $\bb{C}.$  This section will provide a reference for definitions and important lemmas.

\subsection{Linear systems and their parameter spaces}

\begin{dfn}
A linear system of degree $d$ and dimension $r+1,$ or $g^r_d,$ on $C$, is an $(r+1)$-dimensional vector space of linearly equivalent divisors on $C.$  
\end{dfn}

It will be helpful to use both additive and multiplicative notation.  Multiplicatively, a $g^r_d$ can be given as a pair $(\s{L}, V),$ where $\s{L}$ is a line bundle on $C$ and $V$ is an $(r+1)$-dimensional subspace of $H^0(\s{L}).$  A basis of $V$ will be denoted by $\sigma_0, \cdots, \sigma_r.$  Additively, a $g^r_d$ will be given as a vector space $L$ of linearly equivalent divisors on $C,$ with basis $D_0, \cdots, D_r.$  If $L$ is base-point-free, that is, if there is no point $P$ contained in every divisor in $L,$ then $L$ determines a map $\phi_L$ of degree $d$ from the curve $C$ to projective space $\bb{P}^r$ up to projective equivalence.  So a $g^r_d$ can be given equivalently by the pair $(\s{L}, V),$ by $L,$ or by a base divisor $B$ of degree $b \leq d$ and a map $\phi_{L-B}\: C \rar \bb{P}^r$ of degree $d-b.$  By abuse of notation we shall use these notations interchangeably without further comment.

\begin{dfn}
Let $(\s{L}, V)$ be a $g^r_d$ on $C$, and let $P$ be a point on $C.$  An \emph{order basis} for $V$ at $P$ is a basis $(\sigma_0, \cdots, \sigma_r)$ of $V$ constructed as follows: Given $(\sigma_0, \cdots, \sigma_j),$ take $\sigma_{j+1}$ to be any section linearly independent of $(\sigma_0, \cdots, \sigma_j)$ that vanishes to the highest possible order at $P.$
\end{dfn}

Any two order bases $\sigma_i$ and $\tau_i$ at $P,$ differ by a transformation of the form $\tau_i=\sum_{j=0}^i c_j \sigma_j,$ i.e. by triangular matrices.  

\begin{dfn}
The \emph{vanishing sequence} or \emph{multiplicity sequence} $(m_0(V,P), \cdots, m_r(V, P))$ of a $g^r_d$ $(\s{L},V)$ at a point $P$ is given by the orders of vanishing $v_P(\sigma_i)$ of the elements of an order basis at $P.$
\end{dfn}

In particular, nonzero multiplicity $m_r$ indicates that the point $P$ is a base point of the linear system, a nonzero $m_{r-1}$ indicates that the image of $P$ under $\phi$ is an $(m_{r-1})$-fold multiple point, and each $m_i$ indicates an osculating linear subspace of codimension $i+1.$

\begin{dfn}
The \emph{ramification sequence} $(a_0, \cdots, a_r)$ of $(\s{L},V)$ at $P$ is given by $a_i=m_i-(r-i).$
\end{dfn}
Note that we have ordered the vanishing sequence and the multiplicity sequence from greatest to least.  This is the reverse of the customary ordering, but it makes no difference for the degeneration arguments, and it will simplify the filtrations in Section \ref{GP}.

\begin{dfn}
The \emph{weight} or \emph{total weight} of $\s{L}$ at $P,$ is the sum $w(\s{L},P)=\sum_{i=0}^r a_i.$  It will be denoted $w(P)$ when $\s{L}$ is understood.
\end{dfn}

Let $\Pic^d_C$ be the Picard scheme of line bundles of degree $d.$  The Chow ring of $\Pic^d_C$ is generated by the divisor $\theta,$ which is the image in $\Pic^d_C$ of the $(g-1)$-fold product of $C$ with itself, and satisfies the relation $\theta^g=g![*],$ where $[*]$ is the point class, and $\theta^{g+1}=0.$

Let $W^r_d$ be the locus in $\Pic^d_C$ consisting of line bundles $\s{L}$ with at least $r+1$ global sections, and let $W^r_d(m_0, \cdots, m_r)$ be the locus of line bundles $\s{L}$ with at least $r+1$ global sections vanishing to orders at least $m_0, \cdots, m_r$ at some point $Q.$  Let $\s{P}_d$ be a Poincar\'e sheaf on $\Pic^d_C \times C,$ and let $\s{E}$ be the pushforward of $\s{P}_d$ to $\Pic^d.$  Let $\s{G}^r_d$ be the Grassmann bundle $\bb{G}(r+1, \s{E})$ over $\Pic^d_C$ whose fiber over a point $[\s{L}]$ is the set of $(r+1)$-dimensional subspaces of $H^0(\s{L}).$  Then the Chow ring of $\s{G}^r_d$ is generated by the pullback of $\theta$ and by the Schubert classes $\sigma_{i_1, \cdots, i_m}$.  Let $\s{G}^r_d(m_0, \cdots, m_r)$ denote the subscheme of pairs $(\s{L}, V)$ such that $V$ has a basis of sections vanishing to orders $m_0, \cdots, m_r$ at \emph{some} point $Q$ on $C.$

Let $\s{M}_g$ be the moduli space of smooth curves of genus $g,$ and $\overline{\s{M}_g}$ be its natural compactification, the moduli space of stable curves of genus $g.$  These moduli spaces have dimension $1$ when $g=1,$ and $3g-3$ for $g \geq 2.$  Let $\Delta_g$ be the boundary divisor $\overline{\s{M}_g}-\s{M}_g.$  

\subsection{Degeneration and limit series}

It will be convenient to consider families of curves in $\overline{\s{M}_g}$ that degenerate to the boundary.  Let $T \rar \overline{\s{M}_g}$ be a one-parameter family with universal curve $X\rar T,$ such that the generic geometric fiber $X_{\overline{\eta}}$ is a smooth irreducible curve, whereas the special fiber $X_0$ is a reduced but reducible curve of compact type.  Let $(\s{L}, V)$ be a $g^r_d$ on $X_{\overline{\eta}}.$  
After a finite base change, we may assume that the sheaf $\s{L}$ is defined on $X_{\eta}.$  After blowing up if necessary, we may assume from now on that the ramification points of $\s{L}$ specialize to smooth points of $X_0.$

Since the total space $X$ is smooth, $\s{L}$ extends to a sheaf on $X.$  That extension, however, is not unique: we can vary it by twisting by a divisor supported on $X_0$.  If $\wt{\s{L}}$ is an extension of $\s{L}$ and $D$ is any divisor of $X$ supported on $X_0,$ then $\wt{\s{L}} \otimes \s{O}_X(D)$ is another.  Fortunately this is the only ambiguity: if $\wt{\s{L}}$ and $\wt{\s{L}}'$ are any two extensions of $\s{L},$ then $\wt{\s{L}} \otimes \wt{\s{L}}'^{-1}$ is trivial away from $X_0,$ so it must be the line bundle associated to some divisor $D$ supported on $X_0.$  The total degree of any extension $\wt{\s{L}}$ of $\s{L}$ is $d.$  So the sum of the degrees $\wt{\s{L}}_Y$ over all components $Y$ of $X_0$ is $d.$  Since $X_0$ is of compact type and the intersection pairing on the components of $X_0$ is unimodular, there exists an extension $\s{L}_Y$ of $\s{L}$ whose degree is $d$ on $Y$ and $0$ on all other components.

While no one of these extensions is more canonical than the others, together they are unique and determine $(\s{L}, V).$ 

\begin{dfn}
A \emph{limit linear series} on a reducible curve $X_0$ is an association to each component $Y$ of $X_0$ a $g^r_d$ $(\s{L}_Y, V_Y),$ called the $Y$-aspect, satisfying the \emph{Compatibility Condition}: For any two components $Y$ and $Z$ of $X$ meeting at a point $P,$ and for any $i,$ $$0 \leq i \leq r,$$ then $$m_i(V_Y, P)+m_{r-i}(V_Z,P)=d.$$
\end{dfn}

Every linear series $X_\eta$ gives rise to a distinct limit linear series, but the converse need not be true: there may be limit $g^r_d$'s that do not arise from a $g^r_d$ on the smooth fibers.

The following are the key properties of limit series, due to Eisenbud and Harris:

\begin{lem}[\cite{EH-GP}, Prop. 1.3] 
Let $X_0$ be a reduced but reducible curve of compact type, let $Y$ and $Z$ be irreducible components of $X_0$ meeting at $P,$ and let $P'$ be another point of $Y.$  Let $(\s{L}, V)$ be a limit linear series on $X_0.$ Then the multiplicities satisfy the inequality 
\begin{equation}\label{P-P'}
m_i(V_Y, P') \leq m_i(V_Z, P).
\end{equation}
\end{lem}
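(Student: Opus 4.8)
The plan is to eliminate the mixed dependence of \eqref{P-P'} on the two aspects by using the Compatibility Condition to transfer the data of the $Z$-aspect at $P$ into data of the $Y$-aspect at $P$, thereby reducing the inequality to a statement living entirely on the single smooth component $Y$. Applying the Compatibility Condition at the node $P$ with index $r-i$ gives
\[
  m_{r-i}(V_Y,P)+m_i(V_Z,P)=d,
\]
so that $m_i(V_Z,P)=d-m_{r-i}(V_Y,P)$. Substituting this into \eqref{P-P'}, the desired inequality becomes equivalent to the purely one-component estimate $m_i(V_Y,P')+m_{r-i}(V_Y,P)\le d$, which compares the vanishing behavior of the single $g^r_d$ $(\s{L}_Y,V_Y)$ at the two distinct points $P,P'\in Y$.

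I would then prove this reduced inequality by a dimension count inside the $(r+1)$-dimensional space $V_Y$. Let $A\subseteq V_Y$ be the subspace of sections vanishing to order at least $m_i(V_Y,P')$ at $P'$, and let $B\subseteq V_Y$ be the subspace of sections vanishing to order at least $m_{r-i}(V_Y,P)$ at $P$. Choosing an order basis at each point and using that our multiplicities are ordered from greatest to least, the first $i+1$ members of the order basis at $P'$ lie in $A$ and the first $r-i+1$ members of the order basis at $P$ lie in $B$, so $\dim A\ge i+1$ and $\dim B\ge r-i+1$. Since $(i+1)+(r-i+1)=r+2>r+1=\dim V_Y$, the subspaces $A$ and $B$ must meet in a nonzero section $\sigma$. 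Because $P$ and $P'$ are distinct points of $Y$ and $\sigma$ is a nonzero global section of $\s{L}_Y$, their vanishing contributions add inside the divisor of zeros of $\sigma$, giving
\[
  m_i(V_Y,P')+m_{r-i}(V_Y,P)\le v_{P'}(\sigma)+v_P(\sigma)\le\deg\s{L}_Y=d.
\]

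I expect no serious obstacle here: once the reduction via the Compatibility Condition is in place, the entire content of the proof is the pigeonhole dimension count producing the common section $\sigma$, after which the degree estimate is immediate. The only points requiring care are bookkeeping with the reversed (decreasing) ordering convention\emdash so that exactly $i+1$ and $r-i+1$ basis sections are collected into $A$ and $B$ respectively\emdash and recalling that each aspect $(\s{L}_Y,V_Y)$ is by definition a genuine $g^r_d$, so that $\deg\s{L}_Y=d$ and the final estimate closes.
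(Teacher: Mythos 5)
Your proof is correct and complete. Note that the paper itself does not prove this lemma\emdash it is quoted from \cite{EH-GP}, Prop.~1.3 without argument\emdash and what you have written is precisely the standard Eisenbud--Harris argument: the Compatibility Condition at index $r-i$ reduces \eqref{P-P'} to the one-component bound $m_i(V_Y,P')+m_{r-i}(V_Y,P)\le d$, which follows from the pigeonhole count $\dim A+\dim B\ge (i+1)+(r-i+1)=r+2>r+1$ and the degree estimate $v_{P'}(\sigma)+v_P(\sigma)\le\deg\s{L}_Y=d$ (valid since $Y$ is smooth, $X_0$ being of compact type, and each aspect is by definition a $g^r_d$); your bookkeeping with the paper's decreasing ordering convention is also right, and the argument would even survive weakening the compatibility equality to the inequality $m_{r-i}(V_Y,P)+m_i(V_Z,P)\ge d$.
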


\begin{lem}[\cite{EH-GP}, Prop. 1.5]\label{rat-cusp}
Let $Y$ be a rational component of $X_0.$  Let $P$ be the intersection between $Y$ and a component of positive genus, or between $Y$ and a chain of rational curves $W_j^k$ terminating in a curve of positive genus.  Then the aspect $V_Y$ has at least a cusp at $P$. 
\end{lem}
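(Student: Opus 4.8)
The plan is to convert the assertion into a single numerical inequality on the component adjacent to $Y$ across $P$, prove that inequality using the genus hypothesis, and reduce the chain case to the direct case by telescoping \eqref{P-P'}.

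Let $N$ denote the component of $X_0$ meeting $Y$ at $P$, so $N=Z$ in the first case and $N=W_1$ (the first curve of the chain) in the second. I read \emph{at least a cusp at $P$} as the condition $a_j(V_Y,P)=m_j(V_Y,P)-(r-j)\ge 1$ for every $j<r$. Since an order basis has strictly decreasing vanishing orders, $m_k(V_N,P)\le m_1(V_N,P)-(k-1)$ for $k\ge 1$, while the Compatibility Condition gives $m_j(V_Y,P)=d-m_{r-j}(V_N,P)$. Substituting, the requirement $a_j(V_Y,P)\ge 1$ for all $j<r$ is equivalent to $m_k(V_N,P)\le d-k-1$ for $k=1,\dots,r$, and by strict monotonicity this whole family of inequalities collapses to the single bound
\[ m_1(V_N,P)\le d-2. \]
So the entire statement reduces to proving this one estimate for the aspect $V_N$ at $P$.

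This estimate is where the genus enters and is the crux of the argument. I claim that on any component of genus $\ge 1$ every aspect satisfies $m_1(V_Z,Q)\le d-2$ at every point $Q$. Indeed, a nonzero section of a degree-$d$ bundle vanishes to total order $d$, so if the bound failed the top two sections of an order basis would vanish to orders $m_0=d$ and $m_1=d-1$ at $Q$. The first forces the aspect bundle on $Z$ to be $\s{O}_Z(dQ)$, and then the second gives $\s{O}_Z(dQ)\cong\s{O}_Z\bigl((d-1)Q+R\bigr)$ for some point $R$, hence $\s{O}_Z(Q)\cong\s{O}_Z(R)$, i.e.\ $Q\sim R$. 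On a curve of genus $\ge 1$ linearly equivalent points coincide, so $R=Q$ and the two sections are proportional, a contradiction. This settles the first case at once, with $N=Z$.

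For the chain case I propagate the bound down the chain $W_1,\dots,W_k$ using \eqref{P-P'}. Writing $W_0=Y$, $W_{k+1}=Z$ and $P_i=W_i\cap W_{i+1}$ (so $P_0=P$), each $W_i$ carries two distinct nodes $P_{i-1}$ and $P_i$; applying \eqref{P-P'} on $W_i$ with $P_i$ in the role of the node $P$ and $P_{i-1}$ in the role of $P'$ gives $m_1(V_{W_i},P_{i-1})\le m_1(V_{W_{i+1}},P_i)$, and these telescope to
\[ m_1(V_{W_1},P)\le m_1(V_{W_2},P_1)\le\cdots\le m_1(V_Z,P_k). \]
The key lemma applied to $Z$ at $P_k$ bounds the right-hand side by $d-2$, yielding $m_1(V_{W_1},P)\le d-2$, which is exactly the input needed in the second paragraph with $N=W_1$. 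The one genuinely substantive step is the positive-genus lemma of the third paragraph; the reduction via the Compatibility Condition and the telescoping of \eqref{P-P'} are purely formal, so I expect the linear-equivalence argument $\s{O}_Z(Q)\cong\s{O}_Z(R)\Rightarrow Q=R$ to be the only real obstacle.
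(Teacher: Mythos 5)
Your proof is correct, and it is essentially the standard argument: the paper itself supplies no proof here (the lemma is imported from Eisenbud--Harris \cite{EH-GP}, Prop.~1.5), and your route---using the Compatibility Condition to translate the cusp condition on $V_Y$ at $P$ into the single bound $m_1(V_N,P)\le d-2$ on the adjacent aspect, ruling out $m_0=d,\ m_1=d-1$ on a positive-genus component because it would force two distinct points $Q\sim R$ to be linearly equivalent, and telescoping the inequality \eqref{P-P'} down the chain of rational curves---is the same mechanism as the cited source. The only cosmetic point is the final step of your positive-genus lemma: since $\sigma_1$ vanishes to order exactly $d-1$ at $Q$, its divisor is $(d-1)Q+R$ with $R\neq Q$, so the cleaner contradiction is simply that $Q\sim R$ with $Q\neq R$ is impossible in genus $\ge 1$; your phrasing via $R=Q$ and proportionality reaches the same contradiction.
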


\begin{cor}[\cite{EH-GP}, Cor. 1.6]\label{rat-P-P'}
Let $Y$ be a rational component of $X_0.$  Let $Q$ be the intersection of $Y$ with a chain of $W_j^i$'s terminating in a curve of positive genus.
\begin{itemize}
\item Let $P$ and $P'$ be any two points of $Y$ not equal to $Q.$  Then there is at most one section of $V$ vanishing only at $P$ and $P'.$
\item If $Y$ meets another component $Z$ at $P,$ then $m_i(V_Y, P') < m_i(V_Z,P)$ for all but at most $1$ value of $i.$ 
\end{itemize}
\end{cor}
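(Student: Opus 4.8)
The plan is to put coordinates on the rational component and reduce both bullets to a single statement about monomials, which the cusp at $Q$ then rules out. Fix an isomorphism $Y\cong\bb{P}^1$ with affine coordinate $t$ placing $P$ at $t=0$ and $P'$ at $t=\infty$; then $\s{L}_Y=\s{O}_{\bb{P}^1}(d)$, a section of $V_Y$ is a polynomial of degree $\le d$, and its order at $P'$ equals $d$ minus its degree. A section vanishing only at $P$ and $P'$ is exactly a scalar multiple of a monomial $t^a$, and it vanishes at \emph{both} points iff $0<a<d$. So the first bullet asserts that $V_Y$ contains at most one such monomial, and this will follow \emph{a fortiori} from what I will call the \emph{monomial bound}: once $V_Y$ has a cusp at some $Q\ne P,P'$, it contains at most one monomial $t^a$ with $0\le a\le d$ at all.

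The heart of the matter is the monomial bound, and here I would use the precise strength of ``at least a cusp'' at $Q$ from Lemma~\ref{rat-cusp}: in the (decreasing) vanishing sequence this is the condition $m_{r-1}(V_Y,Q)\ge 2$ (ramification $a_{r-1}\ge 1$), which is equivalent to saying that the orders $0$ and $1$ are \emph{not both} attained by sections of $V_Y$ at $Q$ --- if $Q$ is a base point then order $0$ is missing, and otherwise order $1$ is missing because the sequence jumps from $0$ straight to at least $2$. Now suppose $V_Y$ contained two distinct monomials $t^a,t^b$ with $a<b$. Since $Q\ne 0,\infty$ each of them has order $0$ at $Q$, so order $0$ is attained; and, writing $q$ for the coordinate of $Q$, the section $s=t^a-q^{a-b}t^b\in V_Y$ satisfies $s(q)=0$ while $s'(q)=(a-b)q^{a-1}\ne 0$, so $s$ has order exactly $1$ at $Q$ and order $1$ is attained as well. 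This contradicts the cusp, which proves the monomial bound and hence the first bullet.

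For the second bullet I would start from inequality \eqref{P-P'}, which already gives $m_i(V_Y,P')\le m_i(V_Z,P)$ for every $i$, and show equality holds for at most one index. By the Compatibility Condition at $P$ we have $m_i(V_Z,P)=d-m_{r-i}(V_Y,P)$, so equality at the index $i$ is precisely $m_i(V_Y,P')+m_{r-i}(V_Y,P)=d$. Intersecting the space of sections with $v_{P'}\ge m_i(V_Y,P')$ (codimension $i$) with the space with $v_P\ge m_{r-i}(V_Y,P)$ (dimension $i+1$) produces, by a dimension count inside the $(r+1)$-dimensional $V_Y$, a nonzero section whose combined vanishing at $P$ and $P'$ already totals $d$; being a degree-$d$ form it vanishes nowhere else and is therefore the monomial $t^{m_{r-i}(V_Y,P)}$. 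Two distinct equality-indices $i_1<i_2$ then yield monomials with distinct exponents $m_{r-i_1}(V_Y,P)<m_{r-i_2}(V_Y,P)$, contradicting the monomial bound; so at most one index gives equality, which is exactly the second bullet.

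I expect the main obstacle to be pinning down and using the correct strength of ``cusp.'' The corollary is false if ``cusp'' is read as mere simple ramification (sequence $(1,0,\dots,0)$): a generic pencil spanned by two monomials acquires ordinary inflection points away from $P$ and $P'$, and one can even arrange ramification of shape $(1,1,0,\dots,0)$ while keeping two monomials in $V_Y$, so the argument genuinely needs the full cuspidal shape $(1,1,\dots,1,0)$ --- equivalently, the absence of order $1$ --- that Lemma~\ref{rat-cusp} supplies. The remaining points are routine: verifying that the distinguished combination of two monomials vanishes to order exactly $1$ (not higher) at $Q$, and checking that the borderline equalities in \eqref{P-P'} really force honest monomials rather than sections carrying extra base points.
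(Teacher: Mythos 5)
The paper itself offers no proof of this corollary: it is quoted verbatim from Eisenbud--Harris (\cite{EH-GP}, Cor.\ 1.6) as a known input, so there is no in-paper argument to compare against. Your proof is correct and is essentially the standard Eisenbud--Harris argument: a degree-$d$ section on $Y\cong\bb{P}^1$ with divisor supported in $\{P,P'\}$ is a monomial; two monomials $t^a,t^b$ would realize vanishing orders $0$ and $1$ at $Q$ (via $t^a$ and $s=t^a-q^{a-b}t^b$, with $s'(q)=(a-b)q^{a-1}\neq 0$), contradicting $m_{r-1}(V_Y,Q)\ge 2$ from Lemma \ref{rat-cusp}; and for the second bullet, equality in \eqref{P-P'} at an index $i$ combines with the Compatibility Condition and a dimension count in $V_Y$ to force a monomial of exponent $m_{r-i}(V_Y,P)$, so two equality indices produce two distinct monomials (the vanishing sequence at $P'$ is strictly decreasing, so the exponents $d-m_{i_1}(V_Y,P')$ and $d-m_{i_2}(V_Y,P')$ differ), contradicting the monomial bound. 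Two cosmetic points: your parenthetical labels ``codimension $i$'' and ``dimension $i+1$'' are in the increasing-order convention, whereas this paper orders multiplicities from greatest to least, under which the correct counts are dimension $\ge i+1$ for $\{v_{P'}\ge m_i(V_Y,P')\}$ and $\ge r-i+1$ for $\{v_P\ge m_{r-i}(V_Y,P)\}$; either labeling sums past $r+1$, so the nonzero intersection you need survives the swap. Also, your closing aside is off: a pencil spanned by two monomials is in fact \emph{unramified} away from $P$ and $P'$ (each section $t^a(\lambda+\mu t^{b-a})$ has only simple zeros there), though your larger point stands for $r\ge 2$, where simple ramification at $Q$ (e.g.\ vanishing sequence $(3,1,0)$) is compatible with two monomials, so the full cuspidal hypothesis is genuinely needed; neither blemish affects the validity of the proof.
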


\begin{prp}[Additivity For General Reducible Curves, \cite{EH-LS}, 4.5]\label{add}
Let $X$ be a curve of compact type whose components are general curves $X_1, \cdots, X_c$ of genus $g_1, \cdots, g_c.$  Let $P_1, \cdots, P_s$ be a set of general points on $X_1, \cdots, X_c,$ \emph{and, let the nodes of $X$ be general points on the components.}  Then the dimension of the family of $g^r_d$'s on $X$ with specified multiplicities $m_i(P_j)$ is exactly equal to $$\rho=g-(r+1)(g+r-d)-\sum_{j=1}^s \sum_{i=0}^r (m_i(P_j)-i).$$
\end{prp}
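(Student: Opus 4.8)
The plan is to realize the space of $g^r_d$'s on $X$ as a space of limit linear series, stratify it according to the ramification behavior at the nodes, apply the single-component Brill-Noether theorem with ramification on each general component, and then check that the resulting dimension counts add up to $\rho$ independently of the stratum.

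First I would use the fact that a $g^r_d$ on the compact-type curve $X$ is the same datum as a limit linear series, i.e.\ a choice of aspect $(\s{L}_{X_k}, V_{X_k})$ on each component $X_k$ satisfying the Compatibility Condition at every node. Since compatibility constrains only the vanishing sequences $m_i(V_{X_k}, P)$ at the nodes $P$, I would stratify the parameter space by these (finitely many) vanishing data: once a compatible assignment of vanishing orders at every node is fixed, the aspects on distinct components become independent of one another, so the corresponding stratum is simply the product of the spaces of aspects on the individual components.

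Second\emdash and this is the essential input\emdash on each stratum the space of aspects on a component $X_k$ is the variety of $g^r_d$'s on the general curve $X_k$ with prescribed ramification at the general points consisting of the external marked points $P_j \in X_k$ together with the nodes lying on $X_k.$ By the Brill-Noether theorem with ramification for a general pointed curve (the existence results of Kleiman-Laksov and Kempf together with the non-existence and dimension results of Griffiths-Harris and Eisenbud-Harris), this variety is empty when its adjusted Brill-Noether number $\rho_k$ is negative, and otherwise has dimension exactly $\rho_k = g_k - (r+1)(g_k + r - d) - \sum_P w(V_{X_k}, P),$ the sum running over all marked points and nodes on $X_k.$ Here the hypothesis that both the components \emph{and} the nodes are general is precisely what licenses the single-component theorem, and establishing this exactness is the main obstacle: it is where the full strength of Brill-Noether theory enters, and without the generality of the nodes the dimension of an aspect space could jump above $\rho_k.$

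Finally I would carry out the bookkeeping that produces the additivity. Summing the aspect dimensions over all components and using that $X$ is of compact type (so $g = \sum_k g_k$ and the number of nodes is one less than the number of components), the genus terms collapse to $g - (r+1)g.$ The key point is that the Compatibility Condition forces $w(V_Y,P) + w(V_Z,P) = (r+1)(d-r)$ at each node $P = Y \cap Z,$ since $\sum_i [\, m_i(V_Y,P) + m_{r-i}(V_Z,P)\,] = (r+1)d$; hence the total contribution of the nodes to $\sum_k \rho_k$ is \emph{independent of the chosen stratum} and combines with the $(r-d)$ terms from the genus count to leave a single factor $(r+1)(d-r),$ so that $\sum_k \rho_k = g - (r+1)(g+r-d) - \sum_j \sum_i (m_i(P_j) - i) = \rho.$ Thus every non-empty stratum has dimension exactly $\rho,$ and since there are only finitely many strata the whole parameter space has dimension $\rho$ as soon as one stratum is non-empty. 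For that lower bound I would select the stratum with the minimal compatible ramification forced at the nodes and invoke the existence half of the single-component theorem on each $X_k,$ which applies because the components are general and the assumption $\rho \geq 0$ guarantees a balanced, admissible distribution of the node data among the components.
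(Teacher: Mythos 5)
The paper does not actually prove this proposition\emdash it is imported verbatim from Eisenbud--Harris (\cite{EH-LS}, Thm.~4.5)\emdash and your argument is essentially the standard proof from that source: identify $g^r_d$'s on the compact-type curve with limit series, stratify by the vanishing data at the nodes, apply the pointed Brill-Noether theorem on each general component, and use the fact that the Compatibility Condition forces $w(V_Y,P)+w(V_Z,P)=(r+1)(d-r)$ at each of the $c-1$ nodes, so that $\sum_k \rho_k$ telescopes to $\rho$ independently of the stratum; your arithmetic here is correct. Two steps are compressed but are standard and fixable exactly as in Eisenbud--Harris: (i) the strata impose \emph{exact} vanishing sequences at the nodes while the single-component theorem controls loci with \emph{at least} given vanishing, so you need the remark that deepening the ramification strictly decreases $\rho_k$, whence the exact locus is dense of dimension $\rho_k$ in any nonempty stratum closure; and (ii) the existence, when $\rho \geq 0$, of a compatible assignment of node data with every $\rho_k \geq 0$ is a genuine combinatorial lemma (one unit of weight transfers across a node from one aspect to the other, and the dual graph is a tree), which you assert rather than prove.
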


\subsection{Plane curves}

Plane curves of degree $d$ are defined by homogeneous equations of degree $d$ in $3$ variables, modulo scalars.  The space of all homogeneous equations of degree $d$ is an affine space spanned by the set of monomials of degree $d,$ so it has dimension $\frac{1}{2}(d+1)(d-2).$  Modulo scalars, we obtain a projective space of dimension $N=\frac{1}{2}d(d-3).$

The simplest condition we can impose on a plane curve is that it should pass through a given point $P,$ with multiplicity $m.$  This corresponds to the linear subspace of codimension $\frac{1}{2}m(m+1)$ in $\bb{P}^N$ cut out by the equations requiring $f$ to vanish along with all its partial derivatives up to $m$.  Let $P$ be a point on a plane curve.  If we blow up the plane at $P,$ we obtain an exceptional divisor.  Points on the exceptional divisor are called "virtual points," or "infinitely near points, of $P.$  If $C$ has at least an $m$-fold point at $P$, then we can require $C$ to have an $m$-fold point at $P',$ and obtain an additional $\frac{1}{2} m(m+1)$ conditions.  All point conditions, whether actual or virtual, impose independent conditions on $d$-forms up to codimension $N.$

A smooth plane curve of degree $d$ has genus $\frac{1}{2}(d-1)(d-2).$  An ordinary $r$-fold point on $C$ drops the genus by $\frac{1}{2}r(r-1),$ and a singular point $P$ drops the genus by $\sum_{P'} \frac{1}{2}r(r-1),$ where the sum is over all infinitely near points $P'$ in the neighborhood of $P.$

A $g^2_d$ determines a map from an abstract curve $C$ to the plane up to a change of coordinates, given by a $3x3$ matrix up to scalars.

\section{Existence Results}\label{GP}

The idea behind these enumerative existence proofs is that the cycle class of an empty set must be zero.  If we can compute the class of the locus of $g^r_d$'s with a given property and show that it is nonzero, then such $g^r_d$'s must exist.  We do this by expressing the locus as the degeneracy locus of an appropriate map of vector bundles.

We first consider the case when $g+r-d>0.$

Pull back the problem to $\Pic^d_C \times C \times C,$ using the second copy of $C$ to parameterize the moving point $Q.$  Let $\Delta$ be the diagonal on $C \times C.$  Pull back the Poincar\'{e} sheaf $\s{P}_d$ to $\Pic^d_C \times C \times C$ by $\pi_{12}^*.$  The fiber of the vector bundle $\pi_{12}^*(\s{P}_d(nP))/(\pi_{12}^*\s{P}_d)(-m_i \Delta)$ over a point $Q$ of the second copy of $C$ is just $\s{P}_d(nP)/\s{P}_d(-m_iQ).$  

So we can consider the map of vector bundles $$\s{E} \rar \s{F}_0 \rar \s{F}_1 \rar \cdots \rar \s{F}_r$$ on 
$\Pic^d_C \times C,$ where $$\s{F}_i=\pi_{12*}(\pi_{12}^*(\s{P}_d(nP))/(\pi_{12}^*\s{P}_d)(-m_i\Delta)).$$  From now on we shall suppress the $\pi_{12}$ for ease of notation.  Over the point $([\s{L}], Q)$ this reduces to the map $\s{L}(nP)\rar \s{L}(nP)/\s{L}(-m_i Q).$  We want to calculate the degeneracy locus $W$ on $\Pic^d_C \times C$ where each of these maps has kernel of dimension at least $i.$  Then the Gysin image of this locus on $\Pic^d_C$ will be our class $W^r_d(m_i).$  We compute the total Chern classes of $\s{E}$ and $\s{F}_i.$

\begin{lem} The total Chern class of the vector bundle $\s{E}$ is $e^{-\theta}.$
\end{lem}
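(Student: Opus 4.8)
The plan is to compute the Chern character of $\s{E}$ by Grothendieck-Riemann-Roch and then convert the result into a total Chern class. Write $\pi\colon \Pic^d_C \x C \rar \Pic^d_C$ for the projection, so that $\s{E}$ is the pushforward $\pi_!\s{P}_d$ in $K$-theory (an honest bundle in the range, or after a harmless twist, where $R^1\pi_*$ vanishes; in any case it is the Chern character of this virtual pushforward that we compute). Grothendieck-Riemann-Roch gives
\begin{equation*}
\operatorname{ch}(\s{E})=\pi_*\bigl(\operatorname{ch}(\s{P}_d)\cdot\Td(T_\pi)\bigr),
\end{equation*}
where $T_\pi$ is the relative tangent sheaf, pulled back from $T_C$ along the $C$ factor.

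First I would record the standard intersection data on $\Pic^d_C\x C$. Normalizing the Poincar\'e sheaf so that its restriction to $\Pic^d_C\x\{pt\}$ is trivial, one has $c_1(\s{P}_d)=d\,\eta+\gamma$, where $\eta$ is the pullback of the point class from $C$ and $\gamma$ is the mixed class in $H^1(C)\ox H^1(\Pic^d_C)$, subject to $\eta^2=0$, $\eta\gamma=0$, and the Poincar\'e relation $\gamma^2=-2\eta\theta$. Hence $e^{c_1(\s{P}_d)}=(1+d\eta)(1+\gamma-\eta\theta)=1+\gamma+d\eta-\eta\theta$. On the Todd side, $T_C$ is a line bundle of degree $2-2g$, so its pullback contributes $\Td(T_\pi)=1-(g-1)\eta$, all higher terms being killed by $\eta^2=0$.

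Next I would multiply these two expressions, discarding every term containing $\eta^2$ or $\eta\gamma$, to get $1+\gamma+(d-g+1)\eta-\eta\theta$, and then apply $\pi_*$. Integration over the fiber sends $\eta\mapsto 1$ and annihilates $\gamma$ together with every class pulled back from $\Pic^d_C$, so $\operatorname{ch}(\s{E})=(d-g+1)-\theta$; in particular $\operatorname{ch}_0=d-g+1$, $\operatorname{ch}_1=-\theta$, and $\operatorname{ch}_k=0$ for $k\ge 2$. To pass to the total Chern class, write $c(\s{E})=\prod(1+x_i)$ for the Chern roots and use $\log c(\s{E})=\sum_{k\ge 1}\frac{(-1)^{k-1}}{k}p_k$ with $p_k=k!\,\operatorname{ch}_k$. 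Here $p_1=-\theta$ and $p_k=0$ for $k\ge 2$, so $\log c(\s{E})=-\theta$ and therefore $c(\s{E})=e^{-\theta}$.

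The only genuine subtlety, rather than an obstacle, is the normalization of the Poincar\'e sheaf: the clean answer $e^{-\theta}$ holds precisely when $c_1(\s{P}_d)$ carries no component pulled back from $H^2(\Pic^d_C)$, since replacing $\s{P}_d$ by $\s{P}_d\ox\pi^*M$ twists $\s{E}$ by $M$ and shifts $c_1(\s{E})$ by $(\operatorname{rank})\,c_1(M)$. I would therefore fix this normalization at the outset and check that it is the one under which the relation $\gamma^2=-2\eta\theta$ is stated; everything else is formal manipulation governed by $\eta^2=\eta\gamma=0$.
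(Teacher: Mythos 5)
Your proof is correct and takes essentially the same route as the paper: Grothendieck--Riemann--Roch over $\pi\colon\Pic^d_C\x C\rar\Pic^d_C$ with the K\"unneth relations $\eta^2=\eta\gamma=0$, $\gamma^2=-2\eta\theta$, yielding $\operatorname{ch}_1(\s{E})=-\theta$ and $\operatorname{ch}_k(\s{E})=0$ for $k\ge 2$, hence $c(\s{E})=e^{-\theta}$. The only differences are cosmetic: the paper actually computes with the twist $\s{P}_d(nP)$ so that the pushforward is an honest bundle (of rank $d+n+1-g$) --- exactly the ``harmless twist'' you anticipated, which changes only the rank and not the total Chern class --- and you spell out the Newton-identity passage from $\operatorname{ch}$ to $c$, which the paper merely asserts.
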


\begin{proof}
Applying the K\"unneth decomposition to $\Pic^d \times C,$ write $c(\s{P}_d(nP))=1+(d+n)\zeta+\gamma,$ where $\zeta$ is the pullback of the point class from $C,$ and $\gamma$ is the class of the intersection pairing on $H^1(C)$ and $H^1(\Pic^d_C).$  Note that $\zeta^2=\zeta\gamma=0,$ and $\gamma^2=-2\theta\zeta.$  Expand the Chern character as $$ch(\s{P}_d(nP))=e^{c_1(\s{P}_d(nP))}=\sum_{k\geq 0} \frac{((d+n)\zeta + \gamma)^k}{k!}=1+(d+n)\zeta+\gamma +\frac{1}{2}\gamma^2,$$ since all higher terms vanish.  To calculate $c(\s{E}),$ apply Grothendieck-Riemann-Roch.  The Todd class of the vertical tangent bundle is the pullback of the Todd class of the curve $C,$ which is $1-\frac{1}{2} \omega_C=1+(1-g)\zeta.$  Hence $$ch(\s{E})=\pi_{1*}(\Td(T^v)ch(\s{P}_d(nP)))=\pi_{1*}\left(\left(1+(1-g)\zeta\right)\left(1+(d+n)\zeta+\gamma - \theta \zeta\right)\right).$$

The Gysin image $\pi_{1*}$ takes the coefficient of $\zeta$ in the sum, which in our case is $1+d+n-g-\theta.$  So $ch(\s{E})=1+d+n-g-\theta.$  Hence $$c_k(\s{E})=((-1)^k\theta^k/k!),$$ so the total Chern class is $c(\s{E})=e^{-\theta}.$
\end{proof}

\begin{lem}
The total Chern class of $\s{F}_i$ is $$1+(d+(g-1)(m_i-1))\zeta+m_i\gamma - m_i(m_i-1)\zeta\theta.$$
\end{lem}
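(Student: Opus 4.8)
The plan is to compute the total Chern class of $\s{F}_i$ by the same Grothendieck-Riemann-Roch method used for $\s{E}$ in the previous lemma, now applied to the sheaf whose pushforward defines $\s{F}_i.$ Recall that $\s{F}_i=\pi_{12*}(\pi_{12}^*(\s{P}_d(nP))/(\pi_{12}^*\s{P}_d)(-m_i\Delta)),$ so over $([\s{L}],Q)$ the fiber is the quotient $\s{L}(nP)/\s{L}(-m_iQ),$ a skyscraper-type sheaf supported along the diagonal. First I would identify the relevant sheaf on the triple product and recognize that the quotient is the pushforward from the length-$(n+m_i)$ thickened locus cut out by $nP+m_i\Delta.$ The cleanest route is to use the short exact sequence
$$0\to(\s{P}_d)(-m_i\Delta)\to \s{P}_d(nP)\to \s{Q}_i\to 0,$$
where $\s{Q}_i$ is the quotient sheaf whose pushforward is $\s{F}_i,$ so that $ch(\s{F}_i)$ can be read off additively as the difference of the two pushforwards via GRR.

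**Carrying out the Chern character calculation.**
Next I would expand $c_1$ of each term in the K\"unneth basis. For the twist by $-m_i\Delta,$ I need the class of the diagonal $\Delta$ in $H^*(C\times C),$ whose relevant contribution couples the $\zeta$ and $\gamma$ classes and produces the genus-dependent term $(g-1)\zeta$ when restricted appropriately. Writing $c_1(\s{P}_d(-m_i\Delta))=c_1(\s{P}_d)-m_i[\Delta]$ and expanding $e^{c_1}$ up to the relevant codimension, using the multiplication rules $\zeta^2=\zeta\gamma=0$ and $\gamma^2=-2\theta\zeta$ already established, I would collect the coefficient of $\zeta$ after applying $\pi_{12*}$ (which picks out that coefficient, multiplied by the Todd class $1+(1-g)\zeta$ of the vertical tangent bundle). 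The degree term $d$, the coefficient $m_i$ on $\gamma$, and the corrective terms $(g-1)(m_i-1)\zeta$ and $-m_i(m_i-1)\zeta\theta$ should all emerge from this expansion; in particular the $m_i(m_i-1)$ coefficient arises from the $\gamma^2=-2\theta\zeta$ term combined with the binomial coefficient in $\frac{1}{2}(m_i[\Delta])^2.$

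**The main obstacle.**
The hard part will be getting the diagonal class and its self-intersection correct and tracking the precise interaction between the $\Delta$-twist and the K\"unneth components $\zeta,$ $\gamma,$ $\theta.$ The subtlety is that $[\Delta]$ is not simply $\zeta$: its self-intersection $[\Delta]^2$ on $C\times C$ carries the Euler-characteristic/genus information $(2-2g),$ and when pulled back to $\Pic^d_C\times C\times C$ and pushed forward this is what generates the $(g-1)$ factors in both the $\zeta$ and the $\zeta\theta$ terms. I would therefore handle the $\Delta$ contributions carefully using the standard formula for the diagonal in terms of a symplectic basis of $H^1(C),$ keeping track of how the cross terms $[\Delta]\cdot\gamma$ feed into $\theta\zeta.$ Once the diagonal bookkeeping is fixed, the remaining expansion is the same routine truncation of the exponential series, with everything above codimension two vanishing, and collecting the $\zeta$-coefficient yields the stated total Chern class.
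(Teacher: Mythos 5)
Your proposal is correct in outline but follows a genuinely different route from the paper's proof. The paper never invokes Grothendieck--Riemann--Roch for $\s{F}_i$: instead it filters the quotient $\s{P}(nP)/\s{P}(-m_i\Delta)$ by the successive line-bundle quotients $\s{P}(kP)/\s{P}((k-1)P)$ (trivial) and $\s{P}(-k\Delta)/\s{P}(-(k+1)\Delta)\cong \s{P}|_\Delta\ox\omega_C^{\ox k}$ (the conormal bundle of the diagonal being $\omega_C$); since the support is finite over $\Pic^d_C\x C$, the pushforward is exact and the total Chern class of $\s{F}_i$ is simply the Whitney product $\prod_{k=0}^{m_i-1}\bigl(1+(d+2k(g-1))\zeta+\gamma\bigr)$, expanded with $\zeta^2=\zeta\gamma=0$ and $\gamma^2=-2\theta\zeta$. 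There the genus enters through $c_1(\omega_C)=(2g-2)\zeta$, and the coefficient $m_i(m_i-1)$ appears transparently as $\binom{m_i}{2}\gamma^2$. Your route\emdash the exact sequence $0\to\s{P}_d(-m_i\Delta)\to\s{P}_d(nP)\to\s{Q}_i\to0$, GRR applied to the difference, and the K\"unneth decomposition of $[\Delta]$\emdash does work: writing $\zeta_1,\zeta_2,\gamma_1,\gamma_2$ for the classes on the two copies of $C$ and $[\Delta]=\zeta_1+\zeta_2-\delta$ with $\zeta_j\delta=0$, $\delta^2=-2g\,\zeta_1\zeta_2$, and $\gamma_1\delta=\zeta_1\gamma_2$ (standard conventions), the pushforward yields $ch(\s{F}_i)=(n+m_i)+m_i(d+(g-1)(m_i-1))\zeta+m_i\gamma-m_i\theta\zeta$, whence $c_2=\tfrac12 c_1^2-ch_2=-m_i^2\theta\zeta+m_i\theta\zeta=-m_i(m_i-1)\theta\zeta$, recovering the class. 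What the paper's filtration buys is economy: it needs neither the full diagonal class nor the final conversion from $ch$ back to $c$; what your route buys is uniformity with the computation of $c(\s{E})$. Two cautions on your sketch. First, your attribution of the $m_i(m_i-1)$ coefficient to ``the binomial coefficient in $\tfrac12(m_i[\Delta])^2$'' is imprecise in your own framework: the exponential produces $m_i^2/2$, not $\binom{m_i}{2}$, and the deficit is repaired only by combining the cross term $\gamma_1\cdot m_i\delta$ (which transfers $\gamma$ to the $Q$-copy of $C$) with the $ch$-to-$c$ conversion; the clean binomial coefficient belongs to the paper's product expansion, not to GRR. Second, either computation honestly carried out gives the $\zeta$-coefficient $m_i(d+(g-1)(m_i-1))$, as in the paper's own proof and in the determinant entries $N_{ij}$ used afterwards; the lemma statement as printed drops the leading factor $m_i$, so your claim to land on ``the stated total Chern class'' should be read as landing on the corrected one.
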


\begin{proof}
We can filter $\s{P}(nP)/\s{P}(-m_i\Delta)$ with successive quotient bundles of the form $\s{P}(kP)/\s{P}((k-1)P)$ and $\s{P}(-k\Delta)/\s{P}(-(k+1)\Delta).$  The former terms are trivial.  The latter can be written as $\s{P} \otimes \omega_C^{\otimes k},$ and we have $c(\s{P})=1+d\zeta+\gamma$ on $\Pic^d_C \times C.$  Since the diagonal $\Delta$ is another degree-$1$ copy of $C$ in $\Pic^d_C \times C \times C,$ pulling back $\s{P}_d$ to $\Pic^d_C \times C$ and restricting to $\Pic^d_C \times \Delta$ gives the same Chern class $1+d\zeta + \gamma.$  Since $c(\omega_C)=1+(2g-2)\zeta,$ where $\zeta$ is the pullback of the point class from $C,$ $$c(\s{P}_d \otimes \omega_C^{\otimes k})=1+(d+2k(g-1))\zeta + \gamma.$$  Hence the class $c(\s{P}_d(nP)/\s{P}_d(-m_i\Delta))$ is the product $$\prod_{k=0}^{m_i-1}(1+(d+2k(g-1))\zeta + \gamma)=1+m_i \left(d+(g-1)(m_i-1)\right) \zeta + m_i \gamma +\frac{(m_i-1)m_i}{2}\gamma^2,$$ which we can rewrite as $$1+m_i(d+(m_i-1)(g-1))\zeta+ m_i\gamma - m_i(m_i-1)\zeta\theta.$$
\end{proof}

To compute the degeneracy locus $W^r_d,$ we use Fulton's generalized Thom-Porteous formula for maps of filtered vector bundles (\cite{F-D}, Thm. 10.1), to express the class of the degeneracy locus of a map of filtered bundles as a determinant in the Chern classes of the bundles.

\begin{lem}[\cite{F-D}, Thm. 10.1]\label{F-P}
Suppose we are given partial flags of vector bundles $$A_0 \subseteq \hdots \subseteq A_{k-1}$$ and $$B_0 \onto \hdots \onto B_{k-1}$$ on a scheme $X,$ of ranks $a_0 \leq \hdots \leq a_{k-1}$ and $b_0 \geq \hdots \geq b_{k-1},$ and a morphism $h\: A_{k-1} \rar B_0$. (Note that equalities are allowed in these bundles.)

Let $r_0, \ldots, r_{k-1}$ be nonnegative integers satisfying $$0 < a_0-r_0 < \hdots < a_{k-1}-r_{k-1},$$ $$b_0-r_0 > \hdots > b_{k-1}-r_{k-1} >0,$$  Define $\Omega$ to be the subscheme defined by the conditions that the rank of the map from $A_i$ to $B_i$ is at most $r_i$ for $0 \leq i \leq k-1.$  Let $\mu$ be the partition $(q_0^{n_0}, \ldots, q_{k-1}^{n_{k-1}}),$ where
$q_i=b_i-r_i,$ and $n_0=a_0-r_0, \, n_i=(a_i-r_i)-(a_{i-1}-r_{i-1})$ for $0 \leq i \leq k-1.$  Let $n=a_{k-1}-r_{k-1}.$  For $0 \leq i \leq n-1,$ let $\rho(i)=\min\{s\in [0, k-1]: i \leq a_s-r_s=n_0+\hdots + n_s\}.$

The class of $\Omega$ in $A(X)$ is $P_r \cap [X],$ where $$P_r=\det(c_{\mu_i-i+j}(B_{\rho(i)}-A_{\rho(i)}))_{0\leq i, j \leq n-1}.$$  
\end{lem}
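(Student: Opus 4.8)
The plan is to realize $\Omega$ as the birational image of a resolution built from a tower of flag bundles over $X$, to compute the class upstairs as a product of top Chern classes, and then to push down to $X$, recognizing the result as the flagged determinant $P_r$. This is the natural extension of the Kempf--Laksov resolution of the classical (single-pair) Thom--Porteous locus to the filtered setting, where now one imposes a rank condition at each level of the flag simultaneously.

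First I would reduce to a generic situation. By the splitting principle I may pull back along the flag bundle trivializing all the $A_i$ and $B_j$, so that every Chern class becomes an elementary symmetric function in Chern roots and the target identity $P_r$ becomes an identity of symmetric functions to be checked after pushforward; this is harmless because the degeneracy class is compatible with pullback along the smooth flag-bundle projections. Moreover, by Fulton's refined intersection theory the class of $\Omega$ is well defined in $A_{\dim X - |\mu|}(\Omega)$ even when $\Omega$ fails to have the expected codimension, and pushing it to $A(X)$ commutes with pullback and specialization; so it suffices to carry out the computation in the case where the maps are in relative general position and $\Omega$ has the expected dimension, and then transport the answer by the pullback formalism. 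I would therefore assume this genericity from now on.

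Next I would build the resolution. The condition $\rank(A_i \rar B_i) \leq r_i$ is equivalent to the existence of a rank-$(a_i-r_i)$ subbundle $S_i \subseteq A_i$ contained in the kernel of $A_i \rar B_i$, and since $A_0 \subseteq \cdots \subseteq A_{k-1}$ these can be taken nested, $S_0 \subseteq \cdots \subseteq S_{k-1}$, with $\rank S_i = n_0 + \cdots + n_i = a_i - r_i$. I therefore take $\wt\Omega$ to be the flag bundle over $X$ parametrizing such nested subbundles, cut out by the vanishing of the composites $S_i \into A_i \rar B_i$, each a section of $\Hom(S_i, B_i) = S_i^\vee \otimes B_i$. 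The hypotheses $0 < a_0-r_0 < \cdots < a_{k-1}-r_{k-1}$ and $b_0-r_0 > \cdots > b_{k-1}-r_{k-1} > 0$ are exactly what force these vanishing loci to meet in the expected codimension and make $\wt\Omega \rar \Omega$ birational (over the open locus of exact ranks the kernel flag is unique). Upstairs the class of $\wt\Omega$ is the product of the top Chern classes of the $S_i^\vee \otimes B_i$; the Gysin pushforward to $X$ then lowers each of these codimensions to the correct value, and the combined shape is recorded by the partition $\mu=(q_0^{n_0},\ldots,q_{k-1}^{n_{k-1}})$ with $q_i=b_i-r_i$.

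I would then compute $\pi_*[\wt\Omega]$ by iterated Gysin pushforward, one flag-bundle factor at a time, using the known presentation of the Chow ring of a flag bundle over the previous stage and the Gysin formula that extracts a Schur-type coefficient. Carrying this out converts the product of top Chern classes into a determinant whose $(i,j)$ entry is $c_{\mu_i-i+j}$ of a universal difference of bundles, the row at which one jumps from one difference to the next being governed precisely by $\rho(i)=\min\{s : i \leq a_s - r_s\}$, so that row $i$ involves $c(B_{\rho(i)}-A_{\rho(i)})$. The main obstacle is exactly this last step: verifying that the iterated pushforward collapses to the flagged determinant with the stated shift $\mu_i - i + j$ and jump function $\rho$, rather than to some other symmetric-function expression. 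I expect this to require a Jacobi--Trudi-type identity for flagged (multi-)Schur functions together with a careful matching of $\mu$, the ranks $n_i$, and the codimensions $q_i = b_i - r_i$ of the successive Grassmann-bundle factors; the transversality and expected-dimension claims, and the reduction of the general case to the generic one, are routine given Fulton's refined Gysin formalism and I would treat them briefly.
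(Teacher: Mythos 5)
There is no proof in the paper to compare against: this lemma is imported verbatim from Fulton \cite{F-D}, Thm.~10.1, and the paper only applies it. Fulton's own argument runs through double Schubert polynomials and divided-difference operators, then observes that rank conditions of this flagged shape correspond to vexillary permutations, whose Schubert polynomials collapse to the flagged Schur determinant $\det(c_{\mu_i-i+j}(B_{\rho(i)}-A_{\rho(i)}))$. Your plan---a Kempf--Laksov-type resolution by nested kernel flags followed by Gysin pushforward---is a genuinely different and viable route: it is how the classical one-step Thom--Porteous/Giambelli formula is proved, and resolutions of exactly this kind are known to work for the vexillary/flagged pattern. So the architecture is sound, and your identification of the strict inequalities $0<a_0-r_0<\cdots$ and $b_0-r_0>\cdots>0$ as what makes $\widetilde\Omega\rightarrow\Omega$ birational is correct.

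Two concrete gaps remain, one fixable and one essential. The fixable one: your claim that $[\widetilde\Omega]$ is the product of the top Chern classes of the $S_i^\vee\otimes B_i$ is false as stated, because the cutting conditions are redundant. Since $B_i$ is a quotient of $B_{i-1}$ and $S_{i-1}\subseteq S_i$, the vanishing of $S_{i-1}\rightarrow B_{i-1}$ already forces the vanishing of $S_{i-1}\rightarrow B_i$; hence for $i\geq 1$ the section of $\Hom(S_i,B_i)$ is never transverse, and the codimension count $\sum_i(n_0+\cdots+n_i)b_i$ overshoots the correct $\sum_i n_i b_i=|\mu|+\sum_i n_i r_i$. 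You must instead cut with the induced sections of $(S_i/S_{i-1})^\vee\otimes B_i$ and take the product of their Euler classes, of ranks $n_i b_i$. The essential gap is that the heart of the theorem---that the iterated Gysin pushforward of this corrected Euler-class product equals the flagged determinant with shift $\mu_i-i+j$ and jump function $\rho(i)$---is exactly what you defer to ``a Jacobi--Trudi-type identity for flagged Schur functions'' that you expect to hold. That identity \emph{is} the theorem in this approach; without deriving it (e.g.\ by the one-Grassmann-factor-at-a-time pushforward formula and an explicit determinant manipulation at each stage) the proposal is a plan rather than a proof. Finally, the reduction to ``relative general position'' should be replaced by working in the universal case (the flag bundle carrying a tautological morphism, where the locus genuinely has expected codimension) and pulling back, since over an arbitrary $X$ no genericity is available and the class must be defined via Fulton's refined machinery in the first place.
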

In our case, the ranks of the two vector bundles are $$a_i=\rank \s{E}=d+n-g$$ for all $i,$ and $$b_i=\rank \s{F}_i=n+m_i.$$  We want to impose the rank conditions $r_i=d+n-g-i.$  Hence $a_i-r_i=i,$ so the sequence of $a_i-r_i$ is strictly increasing.  For all $i\geq 1,$ we have $$n_i=i-(i-1)=1,$$ so $\rho_i=i$ for all $i.$

Suppose first that $m_i-m_{i+1} \geq 2$ for all $i.$  Then since $b_i=n+m_i$ and $$b_i-r_i=m_i+g-d+i,$$ the sequence $b_i-r_i$ is strictly decreasing, so Theorem \ref{F-P} applies directly.  Hence $$\mu_i=m_i+i+g-d.$$  So the class of $W$ is $c_{m_i+j+g-d}(\s{F}_i-\s{E}).$ Otherwise, suppose that we have a sequence of $l+1$ multiplicities decreasing by $1,$ say $m_k, m_{k+1}, \cdots, m_j=m_k-l.$  There is redundancy in requiring all the multiplicity conditions.  The condition that at most $j$ basis elements vanish at the point $Q$ to multiplicity at least $m_j$ implies all the others.  We can forget about $\s{F}_{k}, \cdots, \s{F}_j$ altogether and renumber the indices to omit it.  Hence for $i>k,$ we have $r_i=d+n-g-i-l,$ so $n_i=1$ for all $i$ except $i=k,$ where $n_i=l.$  So $q_k$ is to be repeated $l+1$ times.  Hence the sequence $\mu_i$ which counts the $q_i$ with their multiplicities, is unchanged.  We still have $$\mu_i=m_i+i+g-d$$ for $0 \leq i \leq r.$  However, $\s{F}_k$ has now been replaced by $\s{F}_{k}.$ 

Hence we have
\begin{lem}
The class of $W$ on $\Pic^d_C$ is $$\det\left(c_{m_i+g-d+j}\left(e^\theta \cdot \big(1+m'_i(d+(m'_i-1)(g-1))\zeta +m'_i \gamma - m'_i(m'_i-1)\zeta \theta\big)\right)\right)_{0\leq i, j \leq n-1}$$ where $m'_k$ is the greatest value $m_j \leq m_k$ such that $m_{j+1} < m_j -1,$
\end{lem}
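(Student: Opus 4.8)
The plan is to assemble the final formula by substituting the two Chern-class computations of the preceding lemmas into the Thom--Porteous determinant, and then to reconcile the substitution with the degeneracy-theoretic reductions carried out just above the statement. First I would recall that the class of $W$ is the determinant $\det(c_{\mu_i-i+j}(\s{F}_{\rho(i)}-\s{E}))$ of Lemma \ref{F-P}, together with the identifications $\rho(i)=i$, $a_i-r_i=i$, and $\mu_i=m_i+i+g-d$ that the discussion above the statement has already established. Plugging $\mu_i=m_i+i+g-d$ into $c_{\mu_i-i+j}$ immediately collapses the Chern-class subscript to $c_{m_i+g-d+j}$, which is precisely the subscript appearing in the claimed determinant. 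So the whole content of the proof reduces to identifying the \emph{bundle} whose Chern classes are being taken, namely $\s{F}_{\rho(i)}-\s{E}$ in $K$-theory.

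Next I would compute $c(\s{F}_i-\s{E})=c(\s{F}_i)\cdot c(\s{E})^{-1}$ in the Chow ring. By the Chern-class lemma for $\s{E}$ we have $c(\s{E})=e^{-\theta}$, hence $c(\s{E})^{-1}=e^{\theta}$, and the lemma for $\s{F}_i$ supplies $c(\s{F}_i)=1+(d+(g-1)(m_i-1))\zeta+m_i\gamma-m_i(m_i-1)\zeta\theta$. Multiplying these two total Chern classes yields exactly the expression $e^{\theta}\cdot\big(1+m_i(d+(m_i-1)(g-1))\zeta+m_i\gamma-m_i(m_i-1)\zeta\theta\big)$ that sits inside the determinant in the statement, provided we interpret the implicit $c(\s{E})^{-1}=e^{\theta}$ factor correctly. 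Here one must be careful that the relations $\zeta^2=\zeta\gamma=0$ and $\gamma^2=-2\theta\zeta$ recorded in the first lemma are used to keep the product finite and to match coefficients; these are routine to verify term by term.

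The one genuinely substantive point — and the step I expect to be the main obstacle — is the replacement of $m_i$ by the modified index $m'_i$ in the final formula. This is forced by the redundancy analysis preceding the statement: when the multiplicity sequence contains a run $m_k,m_{k+1},\dots,m_j=m_k-l$ of consecutive values decreasing by exactly $1$, the intermediate bundles $\s{F}_{k},\dots,\s{F}_j$ are omitted and the surviving index is re-labelled. I would argue that although the \emph{partition} $\mu_i=m_i+i+g-d$ and hence the Chern-class subscripts are unchanged by this reduction, the bundle appearing in row $i$ is governed by $\rho(i)$, which now points not at $\s{F}_i$ but at the bundle with the largest admissible multiplicity $m'_i$; this is exactly the definition $m'_k=\max\{m_j\le m_k:\ m_{j+1}<m_j-1\}$ given in the statement. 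So I would verify that $\rho(i)$ selects the index $i'$ with multiplicity $m'_i$, whence $c_{\ast}(\s{F}_{\rho(i)}-\s{E})$ becomes $c_{\ast}$ of the bundle built from $m'_i$, completing the match. The delicate bookkeeping is ensuring the subscript stays $m_i+g-d+j$ (using the \emph{original} $m_i$) while the bundle entry uses $m'_i$; I would treat this carefully, since it is the place where an off-by-one or a mismatched index would silently corrupt the determinant.
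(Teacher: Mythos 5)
Your proposal is correct and takes essentially the same route as the paper, which states this lemma as the immediate consequence of the preceding Thom--Porteous discussion: the subscript $c_{m_i+g-d+j}$ comes from $\mu_i=m_i+i+g-d$, the factor $e^{\theta}=c(\s{E})^{-1}$ is multiplied against $c(\s{F}_i)$, and the substitution of $m'_i$ in the bundle entry (while the original $m_i$ survives in the subscript, since $m_i+i$ is constant along a run of multiplicities decreasing by $1$) is exactly the paper's redundancy reduction with $\rho(i)$ pointing at the surviving bundle. The only caveat is that the displayed statement of the $c(\s{F}_i)$ lemma omits the leading factor $m_i$ on the $\zeta$-coefficient (its own proof yields $1+m_i(d+(m_i-1)(g-1))\zeta+m_i\gamma-m_i(m_i-1)\zeta\theta$), so your multiplication matches once that typo is corrected, as you in effect silently did.
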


We expand out the determinant as  $\det(a_{ij}),$ where 
\[a_{ij}=\frac{\theta^{m_i+g-d+j}}{(m_i+g-d+j)!}+\zeta\theta^{m_i+g-d-1+j}\frac{m'_i(d+(m'_i-1)(g-1))}{(m_i+g-d-1+j)!} \]\[-\zeta\theta^{m_i+g-d-1+j}\frac{(m'_i-1)m'_i}{(m_i+g-d-2+j)!}+\gamma\theta^{m_i+g-d-1+j}\frac{m'_i}{(m_i+g-d-1+j)!}
\]
We can break up this matrix as a sum.  Set $M_{ij}=\frac{\theta^{m_i+g-d+j}}{(m_i+g-d+j)!}.$ This is the classical term that exists without the movable ramification point.   All but one or two components of the product will be of this form.  Set $N_{ij}=\zeta\theta^{m_i+g-d-2+j}\frac{(m'_i)(d+(m'_i-1)(g-1))}{(m_i+g-d-2+j)!}.$  This term comes from the $\zeta$ part of the canonical sheaf $\omega_C.$  It is always positive.  Since it contains $\zeta,$ it is killed by multiplication with any other term containing $\zeta$ or $\gamma.$  

Set $L_{ij}=\zeta\theta^{m_i+g-d-1+j}\frac{(m'_i-1)m'_i}{(m_i+g-d-2+j)!}.$  This term comes from the $\gamma^2$ in $c(\s{F}_i),$ so it is subtracted.  It contains $\zeta,$ so it is killed by any other term containing $\zeta$ or $\gamma.$
Finally, set $G_{ij}=\gamma\theta^{m_i+g-d-1+j}\frac{m'_i}{(m_i+g-d-1+j)!}.$  This term contains $\gamma$ instead of $\zeta,$ it is killed by multiplication by anything containing $\zeta$ or $\theta^{g-1}.$

We want to expand the determinant $\det(M_{ij}+N_{ij}-L_{ij}+G_{ij})$ as a sum.
\begin{lem}
If $C=A+B,$ then $$\det (C)= \sum_{S \subset \{1, 2, \cdots, r+1\}} \det (D_{ij}(S)),$$ where $D_{ij}(S)=A_{ij}$ if $i\in S,$ otherwise $B_{ij}.$
\end{lem}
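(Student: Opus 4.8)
The statement is a purely formal identity expressing the multilinearity of the determinant in its rows, so the plan is to expand one row at a time. Recall that $\det$ is a multilinear function of the rows of a matrix: if the $i$-th row of a matrix is written as a sum of two vectors, then the determinant equals the sum of the two determinants obtained by substituting each summand for the $i$-th row while leaving every other row unchanged.

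First I would fix the convention that the decomposition $C = A + B$ is entrywise, so that for each index $i$ the $i$-th row of $C$ equals the sum of the $i$-th rows of $A$ and $B$. Then I would apply multilinearity to row $0$, writing $\det(C)$ as a sum of two determinants, one whose $0$-th row is taken from $A$ and one whose $0$-th row is taken from $B$, with all other rows still equal to those of $C$.

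Next I would iterate: apply the same splitting to row $1$ in each of those two determinants, producing four; then to row $2$, and so on through the last row. After expanding all $r+1$ rows, I obtain $2^{r+1}$ determinants, each of which has every row drawn either entirely from $A$ or entirely from $B$. Recording, for each such term, the set $S \subseteq \{1, 2, \cdots, r+1\}$ of rows taken from $A$ (with the complementary rows taken from $B$) reproduces exactly the matrices $D_{ij}(S)$ of the statement. Since each row contributes an independent binary choice between $A$ and $B$, these choices are in bijection with subsets $S$, so summing over all $S$ yields the claimed identity.

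There is no substantive obstacle here; the only point requiring care is the combinatorial bookkeeping \emph{---} confirming that the iterated row expansion produces precisely one determinant for each subset $S$, with no over-counting or omission. This follows because the $r+1$ binary choices are mutually independent and subsets of an $(r+1)$-element index set correspond bijectively to such choices. (In the intended application most of the resulting summands $\det(D(S))$ will subsequently vanish on account of the relations $\zeta^2 = \zeta\gamma = 0$ and $\theta^{g+1} = 0$, but that simplification belongs to the next step and is not needed for the present identity.)
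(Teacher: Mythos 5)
Your proposal is correct and amounts to the same argument as the paper's: the paper expands the Leibniz formula $\det(C)=\sum_{\sigma}\sgn(\sigma)\prod_{i}(A_{i\sigma(i)}+B_{i\sigma(i)})$ and distributes the product, which is precisely what your iterated row-by-row multilinearity accomplishes, with subsets $S$ recording the same binary choices. Your version spells out the bookkeeping (one determinant per subset, $2^{r+1}$ terms in all) that the paper compresses into ``it follows immediately,'' but there is no substantive difference.
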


\begin{proof}
It follows immediately from expanding out the definition of the determinant, $$\det(C)=\sum_{\sigma} \sgn(\sigma)\prod_{i=1}^{r+1}(A_{i\sigma(i)}+B_{i\sigma(i)}).$$ 
\end{proof}

In our case, almost all the terms vanish when we expand the determinant $\det(M_{ij}+N_{ij}-L_{ij}+G_{ij})$ and we are left with $X+Y+Z,$  where the first term is
$X=\sum_{k=1}^{r+1} \det(X_{ij}(k)),$
 where $$X_{ij}(k)=\left \{\begin{array}{ccc}
M_{ij} &\text{ if } &i \neq k\\
N_{kj} &\text{ if } & i=k
\end{array}\right\},$$

the second term is
$Y=- \sum_{k=1}^{r+1} \det(Y_{ij}(k)),$ where $$Y_{ij}(k)
\left \{ \begin{array}{ccc}
M_{ij} & \text{ if } & i \neq k\\
L_{kj} & \text{ if } & i=k
\end{array}\right\},$$ 

and the third term is
$Z= \sum_{1 \leq k \leq l \leq r+1} \det(Z_{ij}(k,l)),$ where $$Z_{ij}(k,l)=\left\{\begin{array}{ccccc}M_{ij}& \text{ if } &i \neq k& \text{ and } &i\neq l\\
G_{ij} & \text{ if }& i=k &\text{ or } &i=l
\end{array}\right\}.$$

All the other possible combinations of $M,$ $N,$ $L$ and $G$ vanish because they contain $\zeta^2,$ $\zeta\gamma,$ or else they fail to contain $\zeta,$ so their Gysin images vanish on $\Pic^d_C.$

Expanding the three terms using the Vandermonde formulas, summing them and taking the Gysin image on $\Pic^d_C,$ we finally obtain the following 
\begin{thm}\label{Wrd}
Let $X$ be a general curve of genus $g$ and let $r, d, m_i$ be numbers such that $g+r-d \geq 0$ and $$\rho(g,r,d,m_i) \geq 0.$$  Then the class $[W^r_d(m_0,\cdots, m_r)]$ of the family of $g^r_d$'s admitting a point $Q$ with vanishing sequence $m_i$ is given by 
\small(\[ W^r_d(m_i)=\theta^c \sum_{k=0}^{r}\frac{(m'_k)(m_k+g+r-d)}{\prod_{i=0}^{r}(m_i+g+r-d)!}\Bigg[\big((d+(m'_k-1)(g-1))\]
\[\prod_{i>j, \, i\neq k, \, j\neq k}(m_i-m_j)\prod_{i \neq k}|m_i-m_k+1|\big)\]
\[- \big(m'_k-1)(m_k+g+r-d-1)\prod_{i>j, \, i\neq k, \, j\neq k}(m_i-m_j)\prod_{i<k}(m_i-m_k+2)\prod_{i>k}(m_k-m_i-2)\big)\]
\[-\sum_{l \neq k}(m'_l)(m_l+g+r-d)|m_k-m_l|\big(\prod_{i> j, \, i \neq k, \, i \neq l, \, j \neq k, \, j \neq l}(m_i-m_j)\prod_{i \neq k}|m_i-m_k+1||m_i-m_l+1|\big)\Bigg],\] where $$c=\sum_{i=0}^r (m_i-i+g+d-r).$$
\end{thm}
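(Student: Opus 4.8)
The plan is to start from the determinantal class produced in the preceding lemma, which $\pi_{1*}$ sends to the sum $X+Y+Z$ of the three families of determinants already isolated (all other terms dying because they carry $\zeta^2$, $\zeta\gamma$, or no $\zeta$ at all), and to evaluate each family in closed form by a single generalized Vandermonde computation, then collect terms. Everything rests on the following evaluation, which I would state and prove first: for any integers $q_0,\dots,q_r$,
\[
\det\!\left(\frac{\theta^{\,q_i+j}}{(q_i+j)!}\right)_{0\le i,j\le r}
 =\theta^{\,\sum_i q_i+\binom{r+1}{2}}\,
 \frac{\prod_{i<i'}(q_i-q_{i'})}{\prod_i (q_i+r)!}.
\]
Two observations make this routine. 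First, in the permutation expansion every term carries the \emph{same} total power of $\theta$, namely $\sum_i q_i+\binom{r+1}{2}$, because the column indices always sum to $\binom{r+1}{2}$; so $\theta$ factors out uniformly and only the scalar determinant $\det\big(1/(q_i+j)!\big)$ remains. Second, clearing the row factor $1/(q_i+r)!$ turns the $(i,j)$ entry into the monic polynomial $(q_i+j+1)\cdots(q_i+r)$ of degree $r-j$ in $q_i$, so the columns are monic of descending degrees and the determinant equals the Vandermonde product $\prod_{i<i'}(q_i-q_{i'})$.

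With this in hand I would treat the three families in turn. For $X(k)$ I factor the single $\zeta$ out of row $k$; the surviving matrix is of the above type with $q_i=m_i+g-d$ except that $q_k$ is lowered by one, the scalar $m'_k\big(d+(m'_k-1)(g-1)\big)$ having been pulled out. The shift $q_k\mapsto q_k-1$ replaces the Vandermonde factors through $k$ by $\prod_{i\ne k}|m_i-m_k+1|$ and lowers the $k$-th factorial to $(m_k+g-d+r-1)!$; putting this over the common denominator $\prod_i(m_i+g+r-d)!$ extracts exactly the factor $(m_k+g+r-d)$. This reproduces the first bracket. For $Y(k)$ the computation is identical except that $L$ carries a factorial shifted by two relative to its $\theta$-exponent---this is the contribution of the $\gamma^2$ term of $c(\s{F}_i)$---so $q_k\mapsto q_k-2$, the Vandermonde factors become $\prod_{i<k}(m_i-m_k+2)\prod_{i>k}(m_k-m_i-2)$, and the denominator yields $(m_k+g+r-d)(m_k+g+r-d-1)$; this is the second, subtracted, bracket. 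For $Z(k,l)$ I factor $\gamma^2=-2\theta\zeta$ out of rows $k$ and $l$ simultaneously; what remains is of the standard type with both $q_k$ and $q_l$ lowered by one, so the $(k,l)$ Vandermonde factor is \emph{unshifted} and equals $m_k-m_l$, the cross-factors give $\prod|m_i-m_k+1||m_i-m_l+1|$, and the $-2\theta$ is absorbed into the common $\theta$-power. Finally I apply $\pi_{1*}$, which deletes the one surviving $\zeta$ from each term and leaves the common power $\theta^c$, factor out the prefactor $m'_k(m_k+g+r-d)/\prod_i(m_i+g+r-d)!$, and sum over $k$ (and over $l\ne k$ in the $Z$-family) to arrive at the stated expression.

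The main obstacle I anticipate is the bookkeeping rather than any conceptual step. Two points need care. First, one must track separately the shift in the $\theta$-exponent and the shift in the factorial denominator: these agree for $N$ and $G$ but differ by one for $L$, and it is precisely the mismatched factorials that convert, over the common denominator $\prod_i(m_i+g+r-d)!$, into the linear factors $(m_k+g+r-d)$ and $(m_k+g+r-d-1)$ appearing in the statement. Second, because the $m_i$ are ordered from greatest to least, the differences $m_i-m_k+1$ change sign across $i=k$, so the raw Vandermonde products must be reorganized into the absolute-value products in the theorem; the requisite signs, together with the overall sign of each family, have to be reconciled with the $\binom{r+1}{2}$ sign coming from the descending column degrees. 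Threading the replacement $m_i\mapsto m'_i$ (needed when consecutive multiplicities differ by less than two, as flagged before the lemma) consistently through all three evaluations is the last point demanding attention.
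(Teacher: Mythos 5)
Your proposal takes essentially the same route as the paper: it starts from the same filtered-Porteous determinant, keeps the same decomposition into the $M,N,L,G$ rows with the same vanishing of all cross-terms, and then evaluates the three surviving families $X$, $Y$, $Z$ by exactly the generalized Vandermonde computation that the paper compresses into the single phrase ``expanding the three terms using the Vandermonde formulas.'' Your explicit identity $\det\bigl(\theta^{q_i+j}/(q_i+j)!\bigr)_{0\le i,j\le r}=\theta^{\sum_i q_i+\binom{r+1}{2}}\prod_{i<i'}(q_i-q_{i'})/\prod_i(q_i+r)!$ together with the shifts $q_k\mapsto q_k-1$ (for $N$), $q_k\mapsto q_k-2$ with the extra loose factor of $\theta$ (for $L$), and $(q_k,q_l)\mapsto(q_k-1,q_l-1)$ with $\gamma^2=-2\theta\zeta$ absorbed by the ordered double sum (for $G$) correctly reproduces all three brackets and the factors $(m_k+g+r-d)$ and $(m_k+g+r-d-1)$, so the argument is correct and, if anything, more detailed than the paper's own.
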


\begin{eg}
A canonical curve has exactly $(g-1)(g)(g+1)$ ramification points.
\end{eg}

\begin{proof}
A canonical curve has $d=2g-2,$ $r=g-1,$ and the ramification must be at least $(g, g-2, g-3, \hdots, 1, 0).$  For any $k \neq 0, k\neq g,$ there exists $i=k+1$ such that $m_i-m_k+1=0.$  For any $k\neq g,$ there exists $i=k+2$ such that $m_i-m_k+2=0.$  When $k=g,$ the coefficient $m_g$ is zero.  So it is sufficient to consider the first term 
\[ \theta^s \sum_{k=0}^r\frac{(m'_k)(m_k+g+r-d)}{\prod_{i=0}^{r}(m_i+g+r-d)!}(d+(m'_k-1)(g-1))
\prod_{i>j, \, i\neq k, \, j\neq k}(m_i-m_j)\prod_{i \neq k}|m_i-m_k+1|\] for $k=0;$ $m'_k=g.$  Note that the codimension is $s=g.$

We have \small{\[\theta^g \frac{(g)(g+g+(g-1)-(2g-2))}{\prod_{i=0}^{g-1}(m_i+g+(g-1)-(2g-2))!}((2g-2)+(g-1)(g-1))
\prod_{i> j \neq g}(m_i-m_j)\prod_{i \neq k}|g-m_i-1|\]
\[= \zeta\theta^g \frac{(g)(g+1)}{\prod_{i=0}^{g-2}(i+1)!(g+1)!}(g+1)(g-1)
\prod_{g-1> i > j \geq 0}(i-j)\prod_{g-1>i>0}|g-i-1|\]
\[=\zeta \theta^g \frac{g(g+1)}{\prod_{i=0}^{g-1}i!(g+1)!}(g+1)(g-1)\prod_{i=0}^{g-1}i! (g-2)!]\]}
Since $\theta^g=g!,$ we have $\theta^g(g+1)=(g+1)!.$  This cancels the $(g+1)!$ in the denominator.  The products $\prod_{i=1}^{g-1} i!$ in the numerator and the denominator cancel with each other, leaving $g(g+1)(g-1).$
\end{proof}

\begin{eg} 
For any $g, r, d$ such that $g+r-d\geq 0,$ and $$g-(r+1)(g+r-d)\geq 0,$$ the expected class of $g^r_d$'s possessing a point with the simplest possible ramification $(r+1, r-1, \cdots, 0)$ is positive. 
\end{eg}

\begin{proof}
The class is positive because the negative terms all contain factors of $$m'_k \prod|m_i-m_k+2|$$ and $$m'_k\prod|m_i-m_k+1||m_i-m_l+1|,$$ so they vanish.

If the dimension of $W^r_d$ on $\Pic^d_C$ is $\rho(g,r,d),$ the dimension of pairs $(\s{L},Q)$ on $\Pic^d_C \times C$ with $\s{L} \in W^r_d$ is $\rho + 1.$  The ramification imposes one additional condition, so the expected dimension of pairs $(\s{L},Q)$ with simple ramification at $Q$ is $\rho.$  Since the coefficient of the class is positive, the locus is non-empty.
\end{proof}

\begin{eg}\label{r=1}
For $r=1$ and $m_1=0$ (base point-free maps to $\bb{P}^1$), the dimension $\rho(g,r,d,m_0, 0)$ is nonnegative if and only if $d \geq \frac{g+m_0}{2},$ and the class $W^r_d(m_0, 0)$ is positive whenever $g> \rho(g,r,d)\neq 0.$
\end{eg}
\begin{proof}
The expected dimension is $g-2(g+1-d)-(m_0-1)+1=-g+2d-m_0 \geq 0$ if and only if $2d \geq g+m_0.$

We have 
$$\small{W^1_d(m_0, 0) =\theta^{2(g-d+1)+m_0-2} \frac{(m_0)(m_0+g+1-d)}{(m_0+g+1-d)!(g+1-d)!}\Big[(d+(m_0-1)(g-1))|m_0-1|-}$$

\small{\[(m_0-1)(m_0+g-d)|m_0-2|-0\Big]=\theta^{2(g-d+1)+m_0-1} \frac{(m_0)(m_0+g+1-d)}{(m_0+g+1-d)!(g+1-d)!}$$
$$\Big[(d+(m_0-1)(g-1))(m_0-1)-(m_0-1)(m_0+g-d)(m_0-2)\Big]$$ $$
= \theta^{2(g-d+1)+m_0-1} \frac{(m_0)(m_0+g+1-d)}{(m_0+g+1-d)!(g+1-d)!}(m_0-1)[(d+g-1)(m_0-1)-(m_0+g)(m_0-2)]$$
$$=\theta^{2(g-d+1)+m_0-1} \frac{(m_0)(m_0+g+1-d)}{(m_0+g+1-d)!(g+1-d)!}(m_0-1)(dm_0-m_0^2+g+1-d+m_0).\]}
Since $d \geq m_0,$ $dm_0 -m_0^2\geq 0.$  Since $2(g+1-d)$ must be between $0$ and $g$ we have $g+1-d \geq 0.$  Hence the total class is positive.
\end{proof}

\begin{eg}
For $r=2,$ when $g+r-d \geq 0$ and $\rho=0,$ the class $W^r_d(t,s,0)$ is zero precisely in the case above, and positive in all other cases.
\end{eg}

If $\rho=0,$ se can set $$g=\frac{1}{2}(3d-s-t-2).$$ Assuming that $t \neq s+1,$ the value of $W^2_d(t,s,0)$ is the positive factor $$\frac{g!}{(t+g+2-d)!(s+g+2-d)(g+2-d)}$$ times $$t(t+g+2-d)[(d+(g-1)(t-1))(t-1-s)-(t-1)(t+g+2-d-1)(t-2-s)-s(s+g+2-d)(t-s)]$$ $$+s(s+g+2-d)[(d+(g-1)(s-1))(t-s+1)-(s-1)(s+g+2-d-1)(t-s+2)-t(t+g+2-d)(t-s)].$$  
To show that this function is nonnegative, extend it to a function of real variables.  By computing the partial derivatives, one can show that this function is strictly increasing in $d$ for fixed $s$ and $t,$ and strictly increasing in $(t-s)$ for fixed $g$ and $d.$  It then remains to compute the minimal cases by hand and check that they are positive.

\begin{eg}
Values of $W^2_d$ for $r=2,$ $\rho=0$ and small values of $g$ and $d$ are included in Table 1.
\end{eg}

\begin{table}[ht]
\caption{Some Small Values of $W^2_d(0,s,t)$}
\label{Table1}
\begin{center}
\medskip
\begin{tabular}{rrrrr|rrrrr}
\\
d=g+2\\
g & d & s & t & $W^2_d(0, s, t)$ & g & d & s & t& $W^2_d(0,s,t)$\\
& & & & & \\
t=s+1 & & &   &                  & t=s+2 & & &  & \\
1 & 3 & 2 & 3 & 0                & 2 & 4 & 2 & 4 & 6\\
3 & 5 & 3 & 4 & 0                & 4 & 6 & 3 & 5 & 24\\
5 & 7 & 4 & 5 & 0                & 6 & 8 & 4 & 6 & 90\\
7 & 9 & 5 & 6 & 0                & 8 & 10 & 5 & 7 & 336\\
& & & & &\\
t=s+3 & & &   &                  & t=s+4\\
3 & 5 & 2 & 5 & 24               & 4 & 6 & 2 & 6 & 60\\
5 & 7 & 3 & 6 & 120              & 6 & 8 & 3 & 7 & 360\\
7 & 9 & 4 & 7 & 504              & 8 & 10 & 4 & 8 & 1680\\
&&&&&\\
t=s+5 & & &   &                  & t=s+6\\
5 & 7 & 2 & 7 & 120              & 6 & 8 & 2 & 8 & 210\\
7 & 9 & 3 & 8 & 840              & 8 & 10 & 3 & 9 & 1680\\
&&&&&\\
d=g+1\\
g & d & s & t & $W^2_d(0, s, t)$ & g & d & s & t& $W^2_d(0,s,t)$\\
&&&&&\\
t=s+1 & & &   &                  & t=s+2 & & &  &\\
4 & 5 & 2 & 3 & 24               & 3 & 4 & 1 & 3 & 24\\
6 & 7 & 3 & 4 & 240              & 5 & 6 & 2 & 4 & 240\\
8 & 9 & 4 & 5 & 1680             & 7 & 8 & 2 & 5 & 1680\\
&&&&&\\
t=s+3 & & &   &                  & t=s+4 & & &  &\\
4 & 5 & 1 & 4 & 120              & 5 & 6 & 1 & 5 & 360\\
6 & 7 & 2 & 5 & 1080             & 7 & 8 & 2 & 6 & 3360\\
8 & 8 & 3 & 6 & 7056             & 9 & 10 & 3 & 7 & 22176\\
&&&&&\\
t=s+5 & & &   &                  & t=s+6 & & &  &\\
6 & 7 & 1 & 6 & 840              & 7 & 8 & 1 & 7 & 1680\\
8 & 9 & 2 & 7 & 8400             & 9 & 10 & 2 & 8 & 18144\\
&&&&&\\
t=s+7 & & &   &                  & t=s+8 & & &  &\\
8 & 9 & 1 & 8 & 3024             & 9 & 10 & 1 & 9 & 5040\\

\end{tabular}
\end{center}
\vspace{-0.75\baselineskip}
\end{table}

Finally, consider the case when $g+r-d<0.$  Then the condition for a $g^r_d$ to exist is vacuous.  Indeed, if $m_i-(r-i)+g+r-d<0,$ then the condition for a $g^r_d$ to have an $(i+1)$-dimensional family of sections that vanish to order $m_i$ is vacuous.  So it is sufficient to apply the Porteous formula to those conditions that are not vacuous.  We have thus obtained the following theorem:

\begin{thm} The class $W^r_d(m_0, \cdots, m_r)$ is \small{\[\theta^c \sum \frac{(m'_k)(m_k+g+r-d)}{\prod (m_i+g+r-d)!}\Bigg[(d+(m'_k-1)(g-1))\prod_{i>j, \, i\neq k, \, j\neq k}(m_i-m_j)\prod_{i \neq k}|m_i-m_k+1|\]
\[- (m'_k-1)(m_k+g+r-d-1)\prod_{i>j, \, i\neq k, \, j\neq k}(m_i-m_j)\prod_{i<k}(m_i-m_k+2)\prod_{i>k}(m_k-m_i-2)\]
\[-\sum_{l \neq k}(m'_l)(m_l+g+r-d)|m_k-m_l|\prod_{i> j, \, i \neq k, \, i \neq l, \, j \neq k, \, j \neq l}(m_i-m_j)\prod_{i \neq k}|m_i-m_k+1||m_i-m_l+1|\Bigg],\]} where $$c=\sum_{(m_i-i+g+r-d)\geq 0} (m_i-i+g+d-r)$$ and all sums and products are defined over the non-vacuous multiplicities, where $$m_i-i+g+r-d \geq 0.$$
\end{thm}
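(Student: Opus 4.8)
The plan is to treat the statement as a direct corollary of the computation behind Theorem~\ref{Wrd}: the entire Chern-class apparatus ($c(\s{E})=e^{-\theta}$, the formula for $c(\s{F}_i)$, the determinant and its expansion into the $M$, $N$, $L$ and $G$ pieces, the Vandermonde evaluation, and the Gysin pushforward) is reused verbatim, the one new ingredient being to identify and discard the rank conditions that become vacuous once $g+r-d<0$. First I would make the vacuousness criterion precise. The condition cut out by $\s{E}\to\s{F}_i$ asks that $\s{L}(nP)\to\s{L}(nP)/\s{L}(-m_iQ)$ have kernel of dimension at least $i$, i.e.\ that $h^0(\s{L}(-m_iQ))\ge i+1$. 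By Riemann--Roch on the fibre, $h^0(\s{L}(-m_iQ))\ge (d-m_i)-g+1$, so this is automatic as soon as $(d-m_i)-g+1\ge i+1$, that is $m_i+i+g-d\le 0$, equivalently $\rank \s{F}_i=n+m_i\le r_i=d+n-g-i$. Thus the $i$-th condition is non-vacuous exactly when $\rank \s{F}_i>r_i$, i.e.\ $m_i-(r-i)+g+r-d>0$ --- the dichotomy recorded just before the theorem, with the boundary case $\rank \s{F}_i=r_i$ counted as vacuous, as Fulton's hypotheses require.

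Next I would observe that the non-vacuous indices form an initial segment. Writing $\mu_i=m_i+i+g-d=\rank \s{F}_i-r_i=b_i-r_i$, the strict decrease of a vanishing sequence ($m_{i+1}\le m_i-1$) gives $\mu_{i+1}-\mu_i=(m_{i+1}-m_i)+1\le 0$, so the $\mu_i$ are non-increasing in $i$; hence $\{\,i:\mu_i>0\,\}$ is an initial segment $\{0,\dots,i^\ast\}$, with $i^\ast\le r$ and $i^\ast<r$ precisely when some tail multiplicities are small enough to be achieved automatically. Scheme-theoretically the degeneracy locus is the intersection of the $W_i=\{\rank(\s{E}\to\s{F}_i)\le r_i\}$, and for a vacuous index $\rank \s{F}_i\le r_i$ makes $W_i$ all of $\Pic^d_C\times C$ (there being no minors of the requisite size to impose any condition), so $W$ is cut out by the strata $0\le i\le i^\ast$ alone. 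I would then apply Fulton's Lemma~\ref{F-P} to the truncated flag $\s{E}\to\s{F}_0\to\cdots\to\s{F}_{i^\ast}$, checking that its hypotheses survive: $a_i-r_i=i$ is still strictly increasing, while $b_i-r_i=\mu_i$ is strictly positive and, after the same amalgamation used in Theorem~\ref{Wrd} when consecutive multiplicities drop by exactly one, strictly decreasing over the retained range.

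With the flag truncated, the inputs $c(\s{E})=e^{-\theta}$ and $c(\s{F}_i)$, the determinant and its break-up into the surviving $X$, $Y$, $Z$ sums, the Vandermonde identities, and the pushforward $\pi_{1\ast}$ to $\Pic^d_C$ all carry over unchanged, now with every sum and product ranging over the retained indices and with $c=\bigl(\sum_{\mu_i>0}\mu_i\bigr)-1$ the codimension of the truncated locus in $\Pic^d_C$ (the $-1$ coming from the generically finite projection that forgets the moving point $Q$). This produces exactly the displayed formula with all sums and products restricted to the non-vacuous multiplicities.

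The principal obstacle is to justify discarding the vacuous conditions at the level of cycle classes rather than of underlying sets. I would need to confirm that each vacuous $W_i$ is genuinely the whole ambient scheme, so that $W$ is the scheme-theoretic intersection of the retained strata with neither embedded nor excess components, and that $\pi_1|_W$ remains generically finite so that the codimension count and the factor $\theta^c$ are correct. A second, purely bookkeeping, difficulty is to see that Fulton's class for the truncated problem --- whose natural normalization is governed by the position-dependent $\mu_i$ --- reassembles into the displayed expression, in which the \emph{original} $r$ still appears inside the factorials $(m_i+g+r-d)!$ (and likewise in the exponent $c$); reconciling that index shift, together with checking that the locus retains its expected codimension so that the Porteous polynomial computes the actual class, is where the genuine work lies. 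Everything else is a transcription of the $g+r-d\ge 0$ argument.
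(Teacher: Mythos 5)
Your proposal takes essentially the same route as the paper: the paper's entire proof of this theorem is the short remark preceding it\emdash that any condition with $m_i-(r-i)+g+r-d<0$ is vacuous, so the filtered Porteous computation of Theorem \ref{Wrd} is simply rerun over the remaining conditions\emdash which is precisely your truncation argument. Your additional details (the Riemann--Roch justification of vacuousness, the observation that the non-vacuous indices form an initial segment, and the check that Fulton's hypotheses hold for the truncated flag) are points the paper leaves implicit rather than a different method, so your write-up is a more careful transcription of the paper's own proof.
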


This theorem allows us to calculate the set of equivalence classes of line bundles that give rise to $g^r_d$'s with the required ramification.  
But any class $[\s{L}]\in \Pic^d_C$ gives rise to a whole family of $g^r_d$'s when $g+r-d<0.$  To calculate the actual dimension of the family of $g^r_d$'s, we need to calculate the dimension of the class $\s{G}^r_d(m_i)$ on the Grassmann bundle $\s{G}^r_d=\bb{G}(r+1, \s{E})$ of $(r+1)$-dimensional spaces of sections of $H^0(\s{L}(nP)).$  

Let $\pi$ be the projection map from the Grassmann bundle $\s{G}^r_d$ to $\Pic^d_C.$  The fibers of the universal subbundle $\s{S}$ are our candidate $g^r_d$'s.  We still need to impose rank conditions such that the kernel of the map $\s{S} \rar \s{F}_i$ should have rank $i+1,$ so we set $r_i=r-i.$  The rank $b_i$ of $\s{F}_i$ is still $n+m_i,$ and the rank $a_i$ of $\s{S}$ is $r+1.$ So when we apply the filtered Porteous formula again, we have $\mu_i=n+ m_i+i-r.$   We need to calculate the Chern classes of $\s{S}.$  
Consider the exact sequence $$0 \rar \s{S} \rar \pi^*\s{E} \rar \s{Q} \rar 0.$$  So $c(\s{S}) \cdot c(\s{Q}) = c(\pi^*\s{E}).$  Thus $c(\s{S})=c(\pi^*\s{E}) \cdot c(\s{Q})^{-1}.$  The total Chern class $c(\s{Q})$ of the universal quotient is $1+\sigma_1 + \dots + \sigma_k,$ where $k$ is the rank of the quotient $\s{Q},$ in our case $n+d-r-g.$

Hence $$c(\s{F}_i-\s{S})=e^\theta \cdot \left(1+m'_i\left(d+(m'_i-1)(g-1)\right)\zeta +m'_i \gamma - m'_i(m'_i-1)\zeta\theta\right)\cdot (1+\dots+\sigma_k).$$   If $\rho_+ \leq 0,$ then every term in this determinant will contain higher powers of $\theta$ than $\theta^g,$ so it will have to vanish.  If $\rho_+ \geq 0,$ then $\s{G}^r_d(m_0, \cdots, m_r)$ is a sum of Vandermonde determinants in $\theta$ and the Schubert classes $\sigma_k$.

\section{Finiteness and Non-Existence Results}\label{LLS}

Let $(g,r,d, m_0, \cdots, m_r)$ be positive integers.  The moving-point Brill-Noether number is $$\rho=1+g-(r+1)(g+r-d)-\sum_{i=0}^r m_i-i.$$  Choose $(g, r, d, m_0, \cdots, m_r)$ such that $\rho \leq 0.$  We wish to prove that a general curve of genus $g$ admits at most finitely many $g^r_d$'s with vanishing sequence $(m_i)$ at any point.

We can degenerate our linear systems on a general curve to limit linear systems in the sense of Eisenbud and Harris (\cite{EH-GP} and \cite{EH-LS}), on a special reducible curve $X_0.$  

For the reducible curve $X_0$ we use the \emph{flag curve}, a semistable version of the $g$-cuspidal curve; it consists of a backbone chain of $g$ rational curves with an elliptic tail attached to a smooth point of each curve.  We may blow up the nodes, attaching extra rational curves.  The resulting curve $X_0$ looks like Figure 1.

Label the backbone curves by $Z_i,$ the "branches" connecting the elliptic curves to the backbone by $W^j_k,$ and the elliptic tails by $E_j.$  Label the intersections of $Z_i$ with $Z_{i+1}$ by $R_i$ and the nodes of the $W^j_k$ by $P_i.$

Let $X$ be a family of curves of genus $g,$ specializing to the flag curve $X_0.$  Let $(\s{L}, V)$ be a $g^r_d$ on the smooth fiber, possessing a ramification point with vanishing sequence $(m_0, \cdots, m_r).$   Assume that the ramification point specializes to a smooth point $Q.$  If this is not the case, we can always blow up the nodes; the result will still be a flag curve with more rational components.  Then we ask what are the possible limits of $\s{L}$ on $X_0.$

Let $\{(\s{L}_Y, V_Y)\}_{Y \text{ components of } X_0}$ be a limit linear series on $X_0,$ and let $Q$ be a ramification point of $(\s{L}_Y, V_Y)$ with ramification numbers $(m_i).$  We will try to count the possible points $Q.$

\begin{prp}\label{Qell}
If $\rho \leq 0,$ the limit of the ramification point $Q$ lies on one of the elliptic tails.
\end{prp}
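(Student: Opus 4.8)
\section*{Proof proposal}

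The plan is to rule out, one component-type at a time, that $Q$ can specialize to a backbone curve $Z_i$ or to a branch $W^j_k$; since these together with the elliptic tails exhaust the components of $X_0$, this confines $Q$ to some $E_j$. I work aspect-by-aspect: on each component $Y$ the datum $(\s{L}_Y,V_Y)$ is a genuine $g^r_d$ of degree $d$, and since the entire genus of $X_0$ is carried by the $g$ elliptic tails, every $Z_i$ and every $W^j_k$ is a copy of $\bb{P}^1$. The quantitative engine is the Pl\"ucker (ramification) formula on a rational curve: the aspect $V_Y$ on such a $Y$ has total ramification weight $\sum_{P\in Y} w(V_Y,P)=(r+1)(d-r)$. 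The strategy is to show that on a rational $Y$ this budget is already consumed by the ramification that the Compatibility Condition and the inequalities \eqref{P-P'}, Lemma \ref{rat-cusp} and Corollary \ref{rat-P-P'} force at the nodes of $Y$, leaving at most one unit of slack at a general interior point --- too little to accommodate $Q$ once $\rho\le 0$.

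First I would translate the hypothesis. Writing $\rho_{\mathrm{BN}}=g-(r+1)(g+r-d)$ and noting that $\sum_i(m_i-i)=\sum_i\bigl(m_i-(r-i)\bigr)=w(Q)$ is exactly the weight of the prescribed sequence at $Q$, the definition of $\rho$ reads $\rho=1+\rho_{\mathrm{BN}}-w(Q)$. Hence $\rho\le 0$ is equivalent to the lower bound $w(Q)\ge \rho_{\mathrm{BN}}+1$. Thus the point we are tracking is forced to carry a definite and, in the relevant range, substantial amount of ramification weight, and the whole burden becomes showing that a rational component cannot supply this weight at a movable point.

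Next I treat the branch case. A branch component $Y=W^j_k$ sits on a chain of rational curves joining the backbone to an elliptic tail; at one node $Q_+$ the chain continues toward the positive-genus tail, and at another node $P$ it meets a component $Z$ leading back toward the backbone. By Lemma \ref{rat-cusp} the aspect $V_Y$ has at least a cusp at $Q_+$, and by Corollary \ref{rat-P-P'} any interior point $P'\neq Q_+$ satisfies $m_i(V_Y,P')<m_i(V_Z,P)$ for all but at most one index $i$, while the Compatibility Condition pins $m_i(V_Z,P)=d-m_{r-i}(V_Y,P)$. Feeding the resulting node weights into the Pl\"ucker total $(r+1)(d-r)$ on $Y$, I would show the forced ramification at $Q_+$ and $P$ already saturates the budget up to a single unit, so a movable ramification point $P'=Q$ of weight $w(Q)\ge 2$ is impossible; the residual case $w(Q)\le 1$ is excluded by $w(Q)\ge\rho_{\mathrm{BN}}+1$. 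The backbone case $Q\in Z_i$ runs the same way, now using that $Z_i$ meets its neighbours $Z_{i\pm1}$ and its attached branch, and that \eqref{P-P'} propagates the node vanishing sequences monotonically along the chain, again exhausting the budget.

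The main obstacle I anticipate is precisely the bookkeeping of the third step: evaluating exactly the ramification that the nodes force on each rational aspect and checking that, measured against $(r+1)(d-r)$, it leaves strictly less slack than the weight $w(Q)\ge\rho_{\mathrm{BN}}+1$ demanded by $\rho\le 0$. This means tracking how the vanishing sequences at successive nodes are constrained by \eqref{P-P'} and the Compatibility Condition as one moves along the backbone, and handling the extremal components at the ends of the chain where the elliptic tail's own Brill--Noether condition becomes the binding constraint; Proposition \ref{add} can be used to cross-check the resulting count. Once this numerical inequality is in hand in each case, the contradiction is immediate and $Q$ is confined to an elliptic tail.
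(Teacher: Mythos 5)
Your skeleton is the paper's: the proof of Proposition \ref{Qell} likewise notes that $Q$ specializes to a smooth point (the nodes having been blown up beforehand), translates $\rho\le 0$ into the lower bound $w(Q)\ge\rho_{\mathrm{BN}}+1$ with $\rho_{\mathrm{BN}}=g-(r+1)(g+r-d)$, and compares this against the maximal weight a smooth point of a rational aspect can carry, namely the Pl\"ucker total $(r+1)(d-r)$ minus the weight forced at the nodes by \eqref{P-P'}, Lemma \ref{rat-cusp} and the chain propagation \eqref{dropbyr}. But the one step you defer to the end is the entire content of the proposition, and the placeholder you insert for it is quantitatively false. The nodes of a rational component do \emph{not} consume the Pl\"ucker budget ``up to a single unit'': what the propagation along the backbone actually forces, and what the paper establishes in the minimum-weight propositions following \ref{Qell}, is node weight on the order of $r(g-1)$, leaving slack $(r+1)(d-r)-r(g-1)=\rho_{\mathrm{BN}}+r$ at a smooth interior point. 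This slack grows with $\rho_{\mathrm{BN}}$; indeed for $\rho_{\mathrm{BN}}>0$ the rational aspects of \emph{unramified} limit series routinely carry many units of ramification at smooth points\emdash that is how ordinary $g^r_d$'s exist on the flag curve at all. Concretely, take $g=4$, $r=1$, $d=4$, $(m_0,m_1)=(4,0)$: then $w(Q)=3=\rho_{\mathrm{BN}}+1$, so $\rho=0$, while the budget on a backbone curve leaves slack $\rho_{\mathrm{BN}}+r=3$; your counting argument as stated does not exclude $Q$ from a $Z_i$ here.

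Two further leaks sit in the same step. Your dichotomy ``forced node weight saturates the budget, so $w(Q)\ge 2$ is impossible, and $w(Q)\le 1$ is excluded by $w(Q)\ge\rho_{\mathrm{BN}}+1$'' silently assumes $\rho_{\mathrm{BN}}\ge 1$; when $\rho_{\mathrm{BN}}=0$ your own lower bound permits $w(Q)=1$. And since the honest slack is $\rho_{\mathrm{BN}}+r$ against a lower bound of $\rho_{\mathrm{BN}}+1$, the margin in this comparison is only $r-1$\emdash which is exactly the accounting behind the paper's later non-existence theorem, whose hypothesis is the stronger $\rho<1-r$. So the proposition does not follow from a per-component heuristic plus ``immediate contradiction'': you must carry out the chain bookkeeping with the exact constants (including the cusps forced at the branch nodes), and the comparison has to be engineered so that the aggregated node-forcing, not a one-unit saturation claim, does the work. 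Until that inequality is actually proved, the argument for excluding the $Z_i$ and $W^j_k$ is open, and with the constants you assert it is false.
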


\begin{proof}
The limit is a smooth point.  So it can not be one of the nodes of a rational curve.  But any smooth point on a rational curve has weight at most $$(r+1)(d-r)-r(g-1).$$  In our case, however, if $\rho \leq 0,$ then $$w \geq (d-r)(r+1)-rg-1 > (d-r)(r+1)-r(g-1).$$
\end{proof}

For all $i,$ we have $i \leq m_i(V_{Z_l},R_l)\leq d-r+i.$  So $$(r+1)(d-r) \geq \sum_{i=0}^r m_i(V_{Z_N}, R_N)-m_l(V_{Z_2}, R_2)$$ since there are $r+1$ terms in the sum, each at most $d-r.$

We have $$m_i(V_{Z_{l+1}},R_{l+1})\geq m_i(V_{Z_l}, R_l),$$ and if $Z_l$ meets one of the $g$ tails, then for all but all but $1$ value of $i,$ $$m_i(V_{Z_{l+1}},R_{l+1})>m_i(V_{Z_l}, R_l).$$  So for these $Z_l,$
\begin{equation}\label{dropbyr}
\sum_{i=0}^r m_i(V_{Z_{l+1}},R_{l+1})-m_i(V_{Z_l}, R_l)\geq r.
\end{equation}

We apply these inequalities to bound the ramification from above and below on the elliptic curve containing $Q$, and to identify the possible $g^r_d$'s.

\begin{prp}
Let $P_0$ be the intersection of a backbone curve $Z_i$ with the $j^{th}$ chain curve $W^1_j.$  Then the weight of $V_{Z_i}$ at $P_0$ is at most $(r+1)(d-r)-r(g-1).$
\end{prp}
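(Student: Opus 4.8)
The plan is to bound the weight at $P_0$ by playing the total ramification of the aspect $V_{Z_i}$, which is a $g^r_d$ on the \emph{rational} curve $Z_i,$ against the ramification that the elliptic tails elsewhere on $X_0$ force at the two nodes joining $Z_i$ to the rest of the backbone. Since $Z_i$ has genus $0,$ the Plücker (total inflection) formula gives
\[
\sum_{P\in Z_i} w(V_{Z_i},P) \;=\; (r+1)d + r(r+1)(0-1) \;=\; (r+1)(d-r).
\]
Let $R_{i-1}$ and $R_i$ be the nodes $Z_{i-1}\cap Z_i$ and $Z_i\cap Z_{i+1}.$ Because every weight is nonnegative,
\[
w(V_{Z_i},P_0)\;\le\;(r+1)(d-r)-w(V_{Z_i},R_{i-1})-w(V_{Z_i},R_i),
\]
so the whole problem reduces to the single inequality $w(V_{Z_i},R_{i-1})+w(V_{Z_i},R_i)\ge r(g-1).$

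Next I would establish this lower bound by accumulating the tail-by-tail jumps of the vanishing sequence along the backbone. Writing $T_l=\sum_k m_k(V_{Z_l},R_l)$ for the total vanishing of $Z_l$ at its node toward $Z_{l+1},$ inequality \eqref{dropbyr} contributes a jump of at least $r$ across each backbone curve that carries an elliptic tail, while the Compatibility Condition converts a total vanishing $T_l$ on one side of a node into $(r+1)d-T_l$ on the other. Summing these jumps along the two arms of the backbone on either side of $Z_i,$ and using that the total vanishing of any sequence is squeezed between $\tfrac12 r(r+1)$ and its maximal value $(r+1)d-\tfrac12 r(r+1)$ (attained by $(d,d-1,\dots,d-r)$), I obtain, via $w=\sum_k a_k = T-\tfrac12 r(r+1)$ at a node, lower bounds $w(V_{Z_i},R_i)\ge r\cdot(\text{number of tails on one arm})$ and $w(V_{Z_i},R_{i-1})\ge r\cdot(\text{number of tails on the other arm}).$ Since the two arms together carry all the tails except the one hanging off $Z_i$ itself at $P_0,$ the counts add up to $g-1,$ giving exactly $w(V_{Z_i},R_{i-1})+w(V_{Z_i},R_i)\ge r(g-1)$ and hence the claimed bound.

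The delicate point is this accumulation, and specifically the node at which the Compatibility Condition \emph{flips} the vanishing sequence: the accumulation that makes the weight at a node large is the one coming from the \emph{opposite} arm, so one must run \eqref{dropbyr} outward to the far end of the backbone and pair it with the extremal-vanishing cap rather than with the trivial minimum. I would also treat the terminal backbone curves ($i=1$ and $i=g$) separately, since there only one backbone node is present; there the missing node contribution has to be recovered from the cusp that Lemma \ref{rat-cusp} forces at $P_0$ together with the single backbone node, or else sidestepped by the observation (Proposition \ref{Qell}) that the moving ramification point itself never specializes to such a curve. The interior case is the essential one, and the same computation proves the statement, invoked in the proof of Proposition \ref{Qell}, that every smooth point of a rational component of $X_0$ carries weight at most $(r+1)(d-r)-r(g-1).$
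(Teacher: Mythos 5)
Your argument is correct and is essentially the paper's own: the paper's one-line proof likewise subtracts the weights at the two backbone nodes from the Pl\"ucker total $(r+1)(d-r)$ on the rational curve $Z_i,$ taking the bound $w(V_{Z_i},R_{i-1})+w(V_{Z_i},R_i)\ge r(g-1)$ from the accumulation inequalities culminating in \eqref{dropbyr} stated just before the proposition. You merely make explicit what the paper leaves implicit\emdash the Compatibility-Condition flips, the extremal-vanishing cap at the far ends of the backbone, and the terminal components\emdash and your tally of one jump of $r$ per elliptic tail on each arm, totaling $g-1$ tails, is exactly the intended justification.
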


\begin{proof}
Since the total ramification on a rational curve is $(r+1)(d-r),$ and the weights at $R_i$ and $R_{i+1}$ add up to $r(g-1),$ we have the weight $$w(V_{Z_i},P_0) \leq (r+1)(d-r)-r(g-1).$$  
\end{proof}

\begin{prp}[Minimum Weight at P]\label{minP}
Let $P_k$ be the intersection of a chain curve $W^k_j$ with the next chain curve $W^{k+1}_j$ or with the elliptic tail $E_j.$  Then the weight $w(V_{W^{k+1}_j},P_k)$ or $w(V_{E_j}, P_k)$ is at least $r(g-1).$
\end{prp}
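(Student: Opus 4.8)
The plan is to reduce the desired lower bound on the ``far'' side of the node $P_k$ to the upper bound already established on the backbone, and then to propagate it along the chain by induction. The one structural fact I would isolate first is a complementarity relation coming straight out of the Compatibility Condition. If $Y$ and $Z$ are the two components meeting at a node $P$, then summing the identity $m_i(V_Y,P)+m_{r-i}(V_Z,P)=d$ over $0\le i\le r$ gives $\sum_i m_i(V_Y,P)+\sum_i m_i(V_Z,P)=(r+1)d$, and since $w(V_Y,P)=\sum_i m_i(V_Y,P)-\binom{r+1}{2}$ this becomes
$$w(V_Y,P)+w(V_Z,P)=(r+1)d-r(r+1)=(r+1)(d-r).$$
Thus the two aspect-weights at any node are complementary with respect to $(r+1)(d-r)$. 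In particular, bounding $w(V_{W^{k+1}_j},P_k)$ (or $w(V_{E_j},P_k)$) below by $r(g-1)$ is equivalent to bounding the \emph{near} aspect $w(V_{W^k_j},P_k)$ above by $(r+1)(d-r)-r(g-1)$.

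Next I would run an induction along the chain $Z_j - W^1_j - W^2_j - \cdots - E_j$, the inductive statement being that each chain aspect $V_{W^k_j}$ has weight at least $r(g-1)$ at its \emph{backbone-side} node $P_{k-1}$. The base case $k=1$ is immediate: at $P_0=Z_j\cap W^1_j$ the preceding proposition gives $w(V_{Z_j},P_0)\le (r+1)(d-r)-r(g-1)$, so the complementarity relation yields $w(V_{W^1_j},P_0)\ge r(g-1)$. For the inductive step, $W^k_j$ is rational, so the total ramification on it is $(r+1)(d-r)$ (the genus-$0$ Pl\"ucker count used already in the preceding proposition); discarding the nonnegative weights away from $P_{k-1}$ and $P_k$ and invoking the induction hypothesis $w(V_{W^k_j},P_{k-1})\ge r(g-1)$ gives $w(V_{W^k_j},P_k)\le (r+1)(d-r)-r(g-1)$. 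Applying complementarity at $P_k$ then produces $w(V_{W^{k+1}_j},P_k)\ge r(g-1)$, which is precisely the inductive statement for $k+1$; at the terminal node the identical argument with $E_j$ in place of $W^{k+1}_j$ delivers $w(V_{E_j},P_n)\ge r(g-1)$. Since $P_k$ ranges over exactly these chain-and-tail nodes, this establishes the proposition.

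I expect the only delicate points to be bookkeeping rather than mathematics. One must keep straight which node of each rational bridge $W^k_j$ points ``toward the backbone'' (where the induction hypothesis is invoked) and which points ``toward the elliptic tail'' (where the conclusion is read off), so that the complementarity relation is applied at the correct end; and one must confirm that the total-weight value $(r+1)(d-r)$ is legitimately available for each chain aspect, which it is, being the same genus-$0$ fact the preceding proposition relies on. The genuine mechanism is transparent: a rational bridge can carry total weight only $(r+1)(d-r)$, so whatever weight is ``saved'' at one end must reappear at the other, and the Compatibility Condition is exactly what turns the backbone upper bound into the tail-side lower bound.
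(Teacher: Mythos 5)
Your proof is correct and follows essentially the same route as the paper's: sum the Compatibility Condition to get the node complementarity $w(V_Y,P)+w(V_Z,P)=(r+1)(d-r)$, then induct along the chain using the genus-zero Pl\"ucker total $(r+1)(d-r)$ on each rational bridge to convert the backbone upper bound into the tail-side lower bound. Your write-up even states the inequality directions correctly (e.g.\ $w(V_{W^k_j},P_k)\le (r+1)(d-r)-r(g-1)$ before applying complementarity), whereas the paper's printed proof has ``$\geq$'' in two places where ``$\leq$'' is clearly intended.
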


\begin{proof}
 For $0< k \leq n,$ let $P_k$ be the intersection of $W^k_j$ with $W^{k+1}_j.$  The proof is by induction on $k.$  Since $w(V_{Z_i},P_0) \geq (r+1)(d-r)-r(g-1),$ and by the Compatibility Condition $$w(V_{Z_i},P)+w(V_{W^1_j},P_1)=(r+1)(d-r),$$ we have $$w(V_{W^1_j},P_0) \geq r(g-1).$$  For the induction step, since the total ramification on a rational curve is $(r+1)(d-r)$ by the Pl\"ucker formula, we have $$w(V_{W^k_j}, P_k) \geq (r+1)(d-r)-r(g-1).$$  Hence, by applying the Compatibility Condition, $$w(V_{W^{k+1}_j},P_k) \geq r(g-1).$$
\end{proof}

\begin{prp}[Maximum Weight at P]\label{maxP}
Let $Q$ lie on the elliptic tail $E_j.$  The vanishing sequence of $V_{E_j}$ at its node $P$ is at most $(d-m_{r-i}),$ and $$w(V_{E_j},P) \leq (r+1)d-\sum m_i-\frac{r(r+1)}{2}.$$  If $\rho$ is the moving-point Brill-Noether number $$\rho=1+g-(r+1)(g+r-d) - \sum_{i=0}^r m_i + \frac{r(r+1)}{2},$$ then $$w(V_{E_j}, P) \leq r(g+1)+\rho+r-1.$$
\end{prp}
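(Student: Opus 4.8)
The plan is to read off all three assertions from a single standard fact about two distinct marked points on a curve, applied to the elliptic aspect $V_{E_j}$. By Proposition \ref{Qell} the limit ramification point $Q$ lies on an elliptic tail $E_j$; since $Q$ specializes to a smooth point of $E_j$ while $P$ is its node, the two points $Q$ and $P$ are distinct, so the argument below is legitimate. First I would establish the pointwise bound $m_i(V_{E_j},P) \leq d - m_{r-i}$ for every $0 \leq i \leq r$, where the $m_{r-i}$ on the right are the vanishing orders of the ramification point, i.e. $m_{r-i}(V_{E_j},Q)=m_{r-i}$. This is the elementary two-point inequality: in the decreasing convention the sections of $V_{E_j}$ vanishing to order at least $m_i(V_{E_j},P)$ at $P$ form a subspace of dimension $i+1$, while those vanishing to order at least $m_{r-i}(V_{E_j},Q)$ at $Q$ form a subspace of dimension $r+1-i$; their dimensions sum to $r+2 > r+1$, so the two subspaces meet in a nonzero section $s$. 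Because $P \neq Q$, the divisor of $s$ has degree $d$ and contains $P$ and $Q$ to these orders, whence $m_i(V_{E_j},P)+m_{r-i}(V_{E_j},Q) \leq d$. This is exactly the first assertion.

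For the weight bound I would sum the pointwise inequality over $i$ and pass to the ramification sequence. By definition $w(V_{E_j},P)=\sum_{i=0}^r (m_i(V_{E_j},P)-(r-i)) = \sum_{i=0}^r m_i(V_{E_j},P) - \tfrac{r(r+1)}{2}$, using $\sum_{i=0}^r (r-i)=\tfrac{r(r+1)}{2}$. Summing the first assertion and reindexing $\sum_i m_{r-i}=\sum_i m_i$ gives $\sum_i m_i(V_{E_j},P) \leq (r+1)d - \sum_i m_i$, and therefore $$w(V_{E_j},P) \leq (r+1)d - \sum m_i - \tfrac{r(r+1)}{2},$$ which is the second assertion.

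The third assertion is then pure bookkeeping. I would solve the definition of $\rho$ for the multiplicity sum, $$\sum m_i = 1 + g - (r+1)(g+r-d) + \tfrac{r(r+1)}{2} - \rho,$$ and substitute into the weight bound. Collecting terms with $(r+1)d + (r+1)(g+r-d) = (r+1)(g+r)$ and $(r+1)(g+r) - r(r+1) = (r+1)g$, the right-hand side collapses to $rg + \rho - 1$, which is in particular at most $r(g+1)+\rho+r-1$, giving the stated inequality.

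I do not expect a genuine obstacle here: the whole proof rests on the two-point dimension count, which is valid on any curve regardless of the genus of $E_j$, together with elementary arithmetic. The only points demanding care are (i) confirming $Q \neq P$, so that the divisor-degree step is valid — this is exactly why we may assume the ramification point specializes to a smooth point — and (ii) keeping the decreasing vanishing-order convention straight, so that it is the complementary index $r-i$, and not $i$, that appears on the right-hand side of the two-point inequality.
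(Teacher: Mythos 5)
Your proposal is correct and takes essentially the same route as the paper: the two-point inequality $m_i(V_{E_j},P)+m_{r-i}(V_{E_j},Q)\leq d$ (which you justify by the standard dimension count that the paper leaves implicit), summation over $i$ to pass from multiplicities to weight, and substitution of the definition of $\rho$. Your arithmetic in fact yields the sharper bound $w(V_{E_j},P)\leq rg+\rho-1=r(g-1)+\rho+r-1$, which is precisely the form the paper invokes later (in the finiteness propositions), so the $r(g+1)$ in the statement is evidently a typo for $r(g-1)$, and your remark that your bound implies the stated one is safe.
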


\begin{proof}
Since the sum $$m_i(V_{E_j}, Q)+m_{r-i}(V_{E_j}, P) \leq d,$$ the vanishing sequence at $P$ is at most $(d-m_i).$  Sum the multiplicities to get $$w(V_{E_j}, P) \leq (r+1)d-\sum m_i -\frac{r(r+1)}{2}.$$  But we have $$\rho=1-rg-r(r+1)+(r+1)d-\sum m_i +\frac{r(r+1)}{2}=1-rg+w(V_{E_j}, P).$$  So $$w(V_{E_j}, P) \leq r(g+1)+\rho+r-1.$$
\end{proof}

Combining the minimum and maximum conditions, we obtain the following:

\begin{thm}[Non-Existence For Sufficiently Low $\rho$]
Let $\rho$ be the \emph{Moving-Point Brill-Noether number} $$\rho=1+g-(r+1)(g+r-d) - \sum_{i=0}^r (m_i -i).$$ 

If $\rho<1-r,$ then there is no $g^r_d$ on a general curve of genus $g$ with vanishing sequence $(m_i)$ at any point $Q.$
\end{thm}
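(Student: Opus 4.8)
The plan is to argue by contradiction via degeneration to the flag curve, reducing the non-existence statement to the two weight estimates already in hand. Suppose, contrary to the claim, that a general curve $X$ of genus $g$ carries a $g^r_d$ with vanishing sequence $(m_i)$ at some point. Place $X$ in a one-parameter family specializing to the flag curve $X_0$. By the properness of the Eisenbud--Harris space of limit linear series, the given $g^r_d$ specializes to a limit linear series $\{(\s{L}_Y,V_Y)\}$ on $X_0$; after blowing up nodes if necessary, as arranged in the preliminaries, its ramification point specializes to a \emph{smooth} point $Q$ at which the vanishing sequence is at least $(m_i)$, since vanishing orders only go up under specialization. It therefore suffices to show that no such limit linear series can exist on $X_0$ once $\rho<1-r$.

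Next I would locate $Q$ and play the two bounds against each other. Since $r\ge1$ we have $\rho<1-r\le0$, so Proposition \ref{Qell} forces $Q$ to lie on one of the elliptic tails $E_j$; let $P$ be the node joining $E_j$ to its chain. The Minimum Weight estimate, Proposition \ref{minP}, gives $w(V_{E_j},P)\ge r(g-1)$, the lower bound propagated along the backbone and the rational chains by the Pl\"ucker relation and the Compatibility Condition. The Maximum Weight estimate, Proposition \ref{maxP}, is where $\rho$ enters: the two-point vanishing inequality $m_i(V_{E_j},Q)+m_{r-i}(V_{E_j},P)\le d$ bounds $w(V_{E_j},P)$ from above, and the computation there identifies this bound with $\rho$, via $\rho=1-rg+w(V_{E_j},P)$, so that $w(V_{E_j},P)\le rg+\rho-1$.

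Combining the two estimates at the single node $P$ yields
\[
r(g-1)\ \le\ w(V_{E_j},P)\ \le\ rg+\rho-1,
\]
that is $\rho\ge 1-r$. If instead $\rho<1-r$, this chain of inequalities is impossible, so no limit linear series with the prescribed ramification exists on $X_0$, and hence---by the specialization of the first paragraph---none exists on the general curve.

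The step I expect to be the main obstacle is the specialization itself: turning ``existence on the general fiber'' into ``existence of a limit $g^r_d$ on $X_0$ with a ramification point of vanishing at least $(m_i)$ at a smooth point.'' This rests on the properness of the relative moduli of limit linear series and on the reduction, by blowing up nodes, that keeps the moving point off the nodes; the weight-counting contradiction itself is then immediate. A secondary bookkeeping point is to use the sharp form of Proposition \ref{maxP}, namely $w(V_{E_j},P)\le rg+\rho-1$ coming directly from the identity $\rho=1-rg+w(V_{E_j},P)$ in its proof, since it is precisely the pairing of this bound with $w(V_{E_j},P)\ge r(g-1)$ that produces the threshold $\rho=1-r$.
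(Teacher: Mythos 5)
Your proposal is correct and follows exactly the paper's (largely implicit) argument: the paper derives the theorem by ``combining the minimum and maximum conditions,'' i.e.\ degenerating to the flag curve, locating $Q$ on an elliptic tail via Proposition \ref{Qell}, and playing $w(V_{E_j},P)\geq r(g-1)$ (Proposition \ref{minP}) against the upper bound of Proposition \ref{maxP} to force $\rho\geq 1-r$. You were also right to use the sharp bound $w(V_{E_j},P)\leq r(g-1)+\rho+r-1$ coming from the identity in the proof of Proposition \ref{maxP} (the $r(g+1)$ in its statement is evidently a typo), which is exactly the form the paper itself uses in the subsequent finiteness proposition.
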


\begin{prp}[Finiteness of Points for $\rho \leq 0$]
If $\rho \leq 0,$ then there are at most finitely many points $Q$ for which a $g^r_d$ exists with multiplicities $m_i$ at $Q.$
\end{prp}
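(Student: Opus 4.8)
I would reduce everything to a count on a single elliptic tail and there exploit the rigidity of the group law. By Proposition~\ref{Qell}, every limiting position of the ramification point $Q$ lies on one of the $g$ elliptic tails $E_j$, so it suffices to bound the number of admissible $Q$ on a fixed tail $E=E_j$ with node $P$. Here ``admissible'' means that $Q$ carries the elliptic aspect $(\s{L}_E,V_E)$ of some limit linear series, with the prescribed vanishing sequence $(m_0>\dots>m_r)$ at $Q$ and some vanishing sequence $(n_0>\dots>n_r)$ at $P$ (recall both sequences are strictly decreasing). Finiteness on the general curve then follows by upper semicontinuity: an admissible point on a smooth fiber specializes to an admissible point on $X_0$, so a positive--dimensional family of admissible $Q$ on the general curve would limit to a positive--dimensional family on the tails, contradicting the finiteness I aim to prove there.

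\textbf{Weight balance.} First I would record the defects $d_i:=d-m_i-n_{r-i}\ge 0$ coming from the Compatibility Condition. Summing over $i$ and using $\sum_i n_{r-i}=\sum_j n_j$ gives $\sum_{i=0}^r d_i = w_{\max}(V_E,P)-w(V_E,P)$, where $w_{\max}(V_E,P)=(r+1)d-\sum_i m_i-\tfrac{r(r+1)}{2}$ is the largest weight at $P$ permitted by compatibility. The maximum--weight computation of Proposition~\ref{maxP} identifies $w_{\max}(V_E,P)=rg+\rho-1$, while Proposition~\ref{minP} gives $w(V_E,P)\ge r(g-1)$. Subtracting, $\sum_{i=0}^r d_i \le \rho+r-1$, so when $\rho\le 0$ we get $\sum_i d_i\le r-1$. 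Since these are $r+1$ nonnegative integers, at least two of them must vanish.

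\textbf{From two equalities to a torsion condition.} The crux is to convert two vanishing defects into a constraint on $Q$ that is independent of the moving line bundle $\s{L}_E\in\Pic^d E\cong E$. If $d_i=0$, then the sections of $V_E$ vanishing to order $\ge m_i$ at $Q$ (a space of dimension $r+1-i$) and those vanishing to order $\ge n_{r-i}$ at $P$ (dimension $i+1$) have dimensions summing to $r+2>r+1$, so they intersect; a nonzero common section has divisor exactly $m_iQ+n_{r-i}P$, whence $\s{L}_E\cong\s{O}_E(m_iQ+n_{r-i}P)$. Applying this to two indices $i\ne i'$ with $d_i=d_{i'}=0$ and subtracting the two linear equivalences eliminates $\s{L}_E$ and yields $(m_i-m_{i'})(Q-P)\sim 0$. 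By strict monotonicity $k:=m_i-m_{i'}\ne 0$, so $Q-P$ is a nonzero $k$--torsion class on $E$, of which there are finitely many. As $k$ is one of the finitely many fixed differences of the given $m$'s, and there are $g$ tails, this bounds the total number of admissible $Q$.

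\textbf{Main obstacle.} The delicate point is precisely this elimination step: isolating $Q$ needs \emph{two} defects to vanish, not merely one (one equality only pins $\s{L}_E$ down in terms of $Q$, leaving $Q$ free as $\s{L}_E$ moves), and it needs the two equalities to involve genuinely distinct multiplicities, which is why the strict decrease of vanishing sequences is indispensable. This in turn forces the constant to be exactly $r-1$, i.e. it relies on the maximum-- and minimum--weight bounds of Propositions~\ref{maxP} and~\ref{minP} being as sharp as stated. A secondary technical matter is to make the specialization rigorous: after the base changes and blow--ups used to put $(\s{L},V)$ in limit form, one must verify that admissible points on the smooth fibers genuinely limit to admissible points on $X_0$, so that upper semicontinuity transfers finiteness in the correct direction.
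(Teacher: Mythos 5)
Your proposal is correct and takes essentially the same route as the paper's own proof: specialize $Q$ to an elliptic tail via Proposition \ref{Qell}, sandwich $w(V_E,P)$ between the bounds of Propositions \ref{minP} and \ref{maxP} so that for $\rho\le 0$ at least two of the compatibility defects vanish, and conclude that $Q-P$ is nonzero $(m_i-m_j)$-torsion on $E$. Your defect bookkeeping and the dimension count producing a common section with divisor exactly $m_iQ+(d-m_i)P$ (hence $\s{L}_E\cong\s{O}_E(m_iQ+(d-m_i)P)$) simply make explicit a step the paper asserts without proof, so this is the same argument, slightly more detailed.
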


\begin{proof} 
As in the previous proof, the limit of any such point $Q$ must lie on an elliptic tail.  The flag curve only has finitely many elliptic tails, so it's enough to show that on any one tail $E,$ there are only finitely many possible limiting points $Q.$  We bound the weights at $P;$ $$r(g-1) \leq w(V_{E_j},P)\leq r(g-1)+\rho+r-1.$$  So the difference between the maximal and minimal possible weights is $\rho+r-1.$  Since $r\leq 0,$ this is at most $r-1.$  Therefore, since there are $r+1$ places in the multiplicity sequence and they differ by only $r-1,$ there are at least two positions $i$ and $j$ where $m_{r-i}(P)$ is exactly the maximum value $d-m_i(Q)$ and $m_{r-j}(P)$ is exactly the maximum value $d-m_j(Q).$  
Thus the linear system contains divisors $m_iQ + (d-m_i)P$ and $m_jQ + (d-m_j)P.$

So $Q-P$ must be $(m_i-m_j)$-torsion.  Hence there are at most finitely many possible choices for $Q$.
\end{proof}

\begin{rmk}
Note that the finiteness of points implies the Brill-Noether non-existence theorem with a fixed general ramification point: if $\rho_{\text{ fixed } Q}<0$ then we have $\rho_{\text{ moving } Q} \leq 0.$  So there are only finitely many $Q$ possessing a $g^r_d$ with ramification $(m_0, \cdots, m_r)$.  In particular, a general $Q$ does not possess such a $g^r_d.$
\end{rmk}

\begin{thm}[Finiteness and Non-Existence of Linear Systems for $r=1,$ $\rho \leq 0$]\label{r1}
If $r=1$ and the expected dimension is $\rho(g, 1, d, m_0, m_1)=0,$ then a general curve of genus $g$ possesses at most finitely many $g^1_d$'s with a ramification point of type $(m_0, m_1).$  If $\rho<0,$ then no such $g^1_d$'s exist.
\end{thm}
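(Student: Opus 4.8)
The plan is to dispatch the two assertions separately, reducing each to the flag-curve analysis already carried out.

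The non-existence statement requires no new argument: specializing the Non-Existence Theorem for sufficiently low $\rho$ to the case $r=1$, its hypothesis $\rho<1-r$ becomes exactly $\rho<0$, and its conclusion is precisely that no $g^1_d$ with vanishing sequence $(m_0,m_1)$ occurs at any point. So I would simply invoke that theorem and move on.

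For the finiteness statement with $\rho=0$, the approach is to degenerate to the flag curve $X_0$ and bound the dimension of the space of limit linear series carrying the prescribed ramification, then appeal to upper semicontinuity: since the dimension on $X_0$ dominates the dimension on the general fiber, it suffices to prove that this space is finite on $X_0$. By the Finiteness of Points proposition the limit of $Q$ lies on an elliptic tail $E_j$ and $Q-P$ is $(m_0-m_1)$-torsion relative to the node $P$, so there are only finitely many candidate points $Q$; I would fix one and count limit series. The first step is to observe that for $r=1,\ \rho=0$ the Minimum- and Maximum-Weight Propositions \ref{minP} and \ref{maxP} force $w(V_{E_j},P)=g-1$, which is the largest value compatible with the sequence $(m_0,m_1)$ at $Q$; hence the vanishing sequence of the $E_j$-aspect at $P$ is maximal, equal to $(d-m_1,\,d-m_0)$. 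Tightness at both slots then produces sections with divisors $m_0Q+(d-m_0)P$ and $m_1Q+(d-m_1)P$, so that $\s{L}_{E_j}=\s{O}_{E_j}(m_0Q+(d-m_0)P)$ and the pencil $V_{E_j}$ spanned by these two divisors are determined by $Q$ alone. Thus only finitely many $E_j$-aspects arise.

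It remains to propagate this rigidity across the rational backbone curves $Z_i$ and the branches $W^k_j$. Here I would use the Compatibility Condition to transport the vanishing data from the elliptic aspect outward, fixing the vanishing sequence at every node, and then invoke the fact that a pencil on $\bb{P}^1$ whose vanishing sequences at the nodes are prescribed is rigid once the available ramification $(r+1)(d-r)=2d-2$ on a rational component is fully absorbed by those nodal conditions---which is exactly what Propositions \ref{minP} and \ref{maxP}, together with Lemma \ref{rat-cusp} and Corollary \ref{rat-P-P'}, guarantee. This leaves finitely many choices on each component, so the limit linear series on $X_0$ form a finite set, and upper semicontinuity then yields finiteness on the general curve. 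I expect this propagation step to be the main obstacle: the elliptic and rational local analyses are each routine, but showing that the rational aspects are genuinely pinned down---rather than varying in a positive-dimensional family that would survive to the general fiber---requires controlling the global combinatorics of vanishing sequences along the whole flag curve, rather than one component at a time.
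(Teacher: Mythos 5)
Your non-existence half is fine, and in fact slightly cleaner than the paper's treatment: for $r=1$ the hypothesis $\rho<1-r$ of the general non-existence theorem is exactly $\rho<0$, so that case does follow by direct specialization. Your analysis of the $E_j$-aspect is also correct and matches the paper: at $\rho=0$ Propositions \ref{minP} and \ref{maxP} pinch $w(V_{E_j},P)$ to exactly $g-1$, both vanishing orders at $P$ are maximal, $Q-P$ is $(m_0-m_1)$-torsion, and the pencil spanned by $m_0Q+(d-m_0)P$ and $m_1Q+(d-m_1)P$ is determined by $Q$, so there are finitely many elliptic aspects.

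The gap is in the step you yourself flag as the main obstacle: the ``propagation of rigidity'' across $X_0-E$ does not work as you describe, and no combination of Lemma \ref{rat-cusp}, Corollary \ref{rat-P-P'} and the weight propositions will make the individual rational aspects rigid. The backbone components $Z_i$ carry \emph{three} special points (two backbone nodes plus the attachment of a branch $W^1_j$), the compatibility conditions at the nodes are inequalities rather than equalities away from $E$, and the total weight slack of the limit series is distributed over the components in combinatorially many ways; for a given distribution an aspect with prescribed vanishing at its nodes can a priori move in a positive-dimensional family. Your criterion---rigidity of a pencil on $\bb{P}^1$ once the Pl\"ucker weight $2d-2$ is ``fully absorbed'' at the nodes---need not hold component by component. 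The paper avoids this entirely: by the Compatibility Condition the $(X_0-E)$-aspect has ramification $(m_0,m_1)$ at the \emph{fixed} point $P$, and since $X_0-E$ is a compact-type curve of genus $g-1$ whose components (rational and elliptic) are general with general nodes, the Additivity Proposition \ref{add} of Eisenbud--Harris applies and gives the dimension of such fixed-point $g^1_d$'s as exactly $\rho_{\text{fixed}}(g-1,1,d,P,m_0,m_1)=\rho(g,1,d,m_0,m_1)$. This converts the moving-point problem into a solved fixed-point problem in one stroke: at $\rho=0$ there are finitely many $(X_0-E)$-aspects (finitely many, not necessarily one---another sign that per-component rigidity is the wrong target), and at $\rho<0$ there are none. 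That single application of Proposition \ref{add} is the idea missing from your proposal; without it, the global combinatorial control you ask for would essentially amount to reproving additivity by hand.
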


\begin{proof}
The limiting ramification point $Q$ must land on a torsion point of an elliptic component $E$, and the aspect of the limit $g^1_d$ on that elliptic component becomes $m_0Q+(d-m_0)P, m_1Q + (d-m_1)P.$  But we need to count the complete limit linear series, not just their $E$-aspects.  

Since the aspect on $E$ is $m_0Q+(d-m_0)P, m_1Q+(d-m_1)P,$ by the Compatibility Condition, the $(X_0-E)$-aspect must have ramification $(m_0, m_1)$ at $P.$  We calculate the dimension of the family of $g^1_d$'s on $(X_0-E)$ with a fixed ramification point of type $(m_0, m_1).$  Since $(X_0-E)$ consists of rational and elliptic curves, they are all general.  There are at most three nodes on the rational components and only one on the elliptic components, so the nodes are all general points (since there is an automorphism that replaces these nodes with any others), and hence $(X_0-E)$ satisfies the Additivity Condition.  Hence the dimension of possible $g^r_d$'s on $(X_0-E)$ with a fixed ramification point at $P$ of type $(m_0, m_1)$ is $$\rho_{\text{fixed}}(g-1, 1, d, P, m_0, m_1)=(g-1)-2(g-1+1-d)-m_0-m_1+1 $$ $$= g+2(g+1-d)-1+2-m_0-m_1+1= \rho(g,1,d, m_0, m_1).$$  So if $\rho=0$ there are finitely many, and if $\rho<0$ there are none.  Since there are only finitely many possible choices for $E$ and finitely many choices for $(X_0-E)$, there are a total of finitely many possible limit linear series with this ramification, and therefore a total of finitely many possible $g^r_d$'s on the general curve.
\end{proof}

When $r=2,$ we can not always prove non-existence for $\rho=-1,$ but we can still prove finiteness when $\rho=0.$

\begin{thm}[Finiteness of Linear Systems for $r=2,$ $\rho\leq 0$]
If $r=2$ and the expected dimension is $\rho(g,2,d, m_i) \leq 0,$ then there are at most finitely many $g^2_d$'s on a general curve of genus $d$ that possess a ramification point with vanishing sequence $(m_0, m_1, m_2).$  
\end{thm}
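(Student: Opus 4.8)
The plan is to mirror the proof of the $r=1$ case (Theorem~\ref{r1}): degenerate the linear systems to limit linear series on the flag curve $X_0$ and use upper semicontinuity, so that it suffices to bound by zero the dimension of the family of limit $g^2_d$'s on $X_0$ carrying a ramification point $Q$ of vanishing sequence $(m_0,m_1,m_2)$. By Proposition~\ref{Qell} the limit point $Q$ lies on one of the finitely many elliptic tails $E$, and by the Finiteness of Points proposition there are only finitely many admissible positions for $Q$ on each tail. Fixing a tail $E$ with node $P$, I would stratify by the vanishing sequence $(b_0,b_1,b_2)$ of the $E$-aspect at $P$. Propositions~\ref{minP} and~\ref{maxP} confine the weight $w(V_E,P)$ to the interval $[2(g-1),\,2(g-1)+\rho+1]$, so for $\rho\le 0$ only finitely many profiles $(b_i)$ occur, and it is enough to treat each one separately.

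For a fixed profile the family splits as an $E$-aspect and an $(X_0-E)$-aspect glued along the compatible vanishing data at $P$. Since $X_0-E$ has general components and general nodes, Proposition~\ref{add} applies: the $(X_0-E)$-aspect must carry the complementary sequence $(d-b_2,d-b_1,d-b_0)$ at $P$ and hence moves in a family of dimension exactly $\sum_i b_i-2g-1$. On the elliptic side I would count the $g^2_d$'s on $E$ with a moving point $Q$ of ramification $(m_i)$ and fixed vanishing $(b_i)$ at $P$; the naive parameter count ($Q$, the degree-$d$ class, and the Grassmannian of sections cut by the two ramification conditions) gives expected dimension $3d-\sum m_i-\sum b_i-1$. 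Adding the two contributions, the expected total is precisely $\rho$, so for $\rho\le 0$ the bookkeeping already yields finiteness \emph{provided the two ramification conditions on $E$ meet transversally}.

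The crux is exactly this transversality on the elliptic tail, and it is where $r=2$ departs from $r=1$. For every profile except the maximal one, no section is forced to be a ``corner'' divisor $m_jQ+(d-m_j)P$, the Schubert conditions at $Q$ and at $P$ are transverse, the elliptic count is sharp, and the total stays at $\rho\le 0$. The obstruction is the extremal profile $(b_i)=(d-m_2,d-m_1,d-m_0)$, in which every basis divisor of $V_E$ must be a corner $m_jQ+(d-m_j)P$: here $\s{L}_E$ is pinned to $\s{O}_E\!\left(m_0Q+(d-m_0)P\right)$, and the mere existence of the aspect forces the two torsion relations $(m_0-m_1)(Q-P)\sim 0$ and $(m_1-m_2)(Q-P)\sim 0$. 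The elliptic aspect is now rigid, but the naive count over-counts, since the $(X_0-E)$-aspect alone contributes $\rho+1$; on $X_0$ this stratum is genuinely $(\rho+1)$-dimensional, and upper semicontinuity only gives the useless bound $\rho+1$.

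The main obstacle is therefore to dispose of this extremal stratum. First I would observe that the two torsion relations force $Q-P$ to be $\gcd(m_0-m_1,m_1-m_2)$-torsion; when this $\gcd$ equals $1$ the stratum is \emph{empty} (it would force $Q=P$), so finiteness is immediate and all the difficulty is confined to the case $\gcd\ge 2$. For those residual $(m_i)$ the plan is to show that the extremal stratum is not smoothable, so that it does not contribute on the general curve. The point is that the finiteness of $Q$ already consumes only the single relation coming from two maximal vanishings, whereas the extremal stratum forces the \emph{second, independent} relation $(m_1-m_2)(Q-P)\sim 0$; this is an extra codimension-one condition that holds on the special fibre but is not imposed by the limit-series structure on nearby smooth fibres, so passing from all limit series on $X_0$ to the smoothable sublocus drops the dimension from $\rho+1$ back to $\rho\le 0$. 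This smoothability step — requiring the refined, linked compatibility at $P$ rather than the coarse vanishing-sequence compatibility used elsewhere — is the part I expect to be hardest and most delicate, and it is precisely the failure of the flag curve to be ``sufficiently general'' for moving ramification that is flagged in the introduction. Once the extremal stratum is removed, every stratum contributes dimension at most $\rho\le 0$, and summing over the finitely many tails, positions $Q$, and profiles $(b_i)$ produces only finitely many $g^2_d$'s, as claimed.
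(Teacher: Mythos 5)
Your reduction tracks the paper's proof up to the decisive step: degeneration to the flag curve, localization of $Q$ on an elliptic tail $E$ with the weight $w(V_E,P)$ pinched into an interval of length $r-1=1$, finitely many non-extremal $E$-aspects (two corner divisors plus one divisor $m_jQ+(d-m_j-1)P+R$ with $R$ forced by linear equivalence, so no transversality claim on $E$ is even needed\emdash the paper just enumerates the aspects), and the identification of the one problematic stratum: the all-corner aspect, nonempty exactly when $\gcd(m_0-m_1,m_1-m_2)\geq 2$, whose $(X_0-E)$-side moves in a $(\rho+1)$-dimensional family. But at precisely this point your proposal stops being a proof. ``Show the extremal stratum is not smoothable'' is the entire remaining content of the theorem, and the route you sketch\emdash a refined, linked compatibility at $P$ cutting the smoothable sublocus down by one\emdash is announced, not executed, and does not follow from the machinery you invoke. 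Eisenbud--Harris theory tells you every limit of a family of $g^2_d$'s is a limit series, and gives a smoothing criterion when the local dimension is the expected one; for a stratum of dimension $\rho+1$ it is silent in both directions, so nothing you have written excludes that this stratum smooths. Your heuristic that the second torsion relation $(m_1-m_2)(Q-P)\sim 0$ is ``an extra codimension-one condition not imposed on nearby fibres'' is not an argument: on the nearby smooth fibres there is no pair $(Q,P)$ on which to impose it, so its ``codimension'' there is undefined, and the dimension drop you want is exactly what must be proved.

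The paper closes this gap by a different, global device that never decides smoothability on $X_0$ at all. Suppose the general curve carried a $1$-dimensional family $\Lambda$ of such $g^2_d$'s. Its class in the Grassmann bundle $\s{G}^2_d$ is $a\theta^{g-1}\sigma_{\text{top}}+b\theta^{g}\sigma_{\text{top}-1}$ with $a,b\geq 0$, and the divisor class $\lambda=c\theta+e\sigma_1$ of series ramified at a fixed general point $R$ has $e\neq 0$ (taking $n$ large enough that $\rank\s{E}\geq 4$); since $\sigma_1$ meets every effective class positively, $\lambda\cap\Lambda$ is nonzero for \emph{every} choice of $R$. Degenerating these members to $X_0$ and taking $R$ a general point of the tail $E$ yields the contradiction: by your own analysis the extremal $E$-aspect is essentially unique, hence ramified at only finitely many points of $E$, so it cannot be ramified at a general $R$. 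In short, the paper converts the excess-dimensional bad stratum into a statement about the hypothetical family upstairs and kills it by enumerative positivity; to complete your outline you would need either this step or an actual proof of your non-smoothability claim (say via a genuinely refined compatibility theory at $P$ with a dimension bound), and the latter is nowhere supplied.
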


\begin{proof}
As before, we can degenerate the curve to the flag curve.  The limiting position of the ramification point $Q$ is a torsion point on an elliptic tail $E$ relative to the node $P.$  The difference between the minimum and maximum possible weights of the $E$-aspect at the node $P$ is at most $r-1=1.$  So the linear system on $E$ is generated by three divisors, at least two of which are linear combinations of $P$ and $Q$ exclusively.

If $(m_2-m_1)$ and $(m_1-m_0)$ are relatively prime, then the three divisors $$m_0Q+(d-m_0)P, \text{ } m_1Q+(d-m_1)P \text{ and } m_2Q+(d-m_2)P$$ can not all be linearly equivalent, since that would require that $Q=P.$  So the linear system can only be of the form $m_0Q+(d-m_0)P,$ $m_1Q+(d-m_1)P$ and $m_2Q+(d-m_2-1)P+R,$ up to renumbering the $m_i$'s.  The point $R$ is completely determined by the linear equivalence.  So there are only finitely many such aspects on $E.$  Since the ramification of the $E$ aspect at $P$ is $(d-m_0, d-m_1, d-m_2-1),$ the ramification on $X_0$ at $P$ is $(m_0, m_1, m_2+1).$  We can compute the dimension of possible $g^2_d$'s on the complement $(X_0-E)$ with this ramification at the fixed point $P:$ 
$$(g-1)-3(g-1+2-d)-m_0-m_1-m_2-1+3=g-3(g+2-d)+2 - m_0-m_1+3 = \rho.$$

In case $(m_2-m_1)$ and $(m_1-m_0)$ have a common factor, then there is also the possibility that the $E$-aspect is just $$m_0Q+(d-m_0)P, \text{ } m_1Q+(d-m_1)P, \text{ } m_2Q+(d-m_1)P.$$  In this case, the ramification of the $E$-aspect is $(d-m_0, d-m_1, d-m_2)$ at $P$, so the ramification of the $(X_0-E)$-aspect is only $(m_0, m_1, m_2).$  The dimension of the family of such limit linear series is $1.$

Suppose that the general curve of genus $g$ actually had a $1$-parameter family of $g^2_d$'s with ramification $(m_0, m_1, m_2).$  Consider the class $[\Lambda]$ of this locus in the Grassmann bundle $\s{G}^r_d.$  If it is actually a non-empty locus of dimension $1,$ then its class is $a\theta^{g-1}\sigma_{\text{top}} + b\theta^g\sigma_{\text{top}-1},$ where $\sigma_{\text{top}}$ is the top Schubert class, for some nonnegative coefficients $a$ and $b.$  Then we should be able to intersect it with the codimension $1$ class $\lambda$ of linear series that are ramified at a fixed general point $R.$  This class is of the form $c \theta + e \sigma_1.$  Assume that the rank of $\s{E}$ is at least $4,$ which we can force by choosing $n$ sufficiently large.   Then the coefficient $e$ is nonzero, since the intersection with the fiber over any point of $\Pic^d_C$ is non-empty: if the line bundle $\s{L}(nP)$ has a $4$-dimensional family of sections, then we can certainly pick a $3$-dimensional subfamily that vanish to orders at least $(0, 1, 3)$ at $R.$  But the intersection of $\sigma_1$ with any class is positive.  Hence $\lambda \cap \Lambda$ is positive.  

Hence there must exist a non-empty family $L$ of $g^2_d$'s with ramification $(m_0, m_1, m_2)$ at $Q$ and at least simple ramification at $R,$ for \emph{every} fixed point $R$ on the general curve.  But what happens when we try to degenerate these $g^2_d$'s to $X_0?$  Since the condition for a point $R$ to be a ramification point of a $g^2_d$ is a closed condition, it must be that every point $R$ on $X_0$ is a ramification point of some limit linear series. But there is only one possibility for the $E$-aspect, and it can only be ramified at finitely many points.  At a fixed general point $R$ on $E,$ there is no ramification.  Hence we obtain a contradiction.

So there can be at most finitely many $g^2_d$'s with a ramification point of type $(m_0, m_1, m_2).$
\end{proof}

The only remaining open question on maps to the plane is whether there exist $g^2_d$'s with ramification of expected dimension $-1$ when the ramification numbers have a common factor.  We have constructed such $g^2_d$'s on the flag curve, and indeed they exist on any reducible curve containing an elliptic component, but they need not exist on the general curve. We shall see in Section \ref{PC} that in fact they do not.

When $r=3,$ the situation becomes more complicated and begins to resemble the general case.

\begin{prp}[Finiteness Condition for $r=3$]
If $r=3$ and the expected dimension is $\rho(g,r,d, m_i)\leq 0,$ then the dimension of $\s{G}^r_d(m_0, \cdots, m_3)$ is at most $1.$  If in addition, the differences $m_i-m_j$ are pairwise relatively prime, then $W^r_d$ is finite.
\end{prp}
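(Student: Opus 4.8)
The plan is to push the flag-curve degeneration used for $r=1,2$ one slot further, tracking how many of the four vanishing orders at $Q$ are realized by \emph{pure} divisors. First I would degenerate the general curve to the flag curve $X_0$ and invoke Proposition \ref{Qell} to place the limiting ramification point $Q$ on an elliptic tail $E$, necessarily at a point torsion with respect to the node $P$. Call a slot $i$ pure if the $E$-aspect carries a section with divisor exactly $m_iQ+(d-m_i)P$, so that its vanishing at $P$ is maximal in that slot. The maximum-weight computation of Proposition \ref{maxP} together with the minimum-weight bound of Proposition \ref{minP} shows that the total deficiency $\delta=w_{\max}-w(V_{E},P)$ satisfies $\delta\le\rho+r-1=\rho+2$; since each non-pure slot costs at least one unit of deficiency, at least $2-\rho\ge2$ of the four slots are pure (so that lower $\rho$ forces more pure slots).

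Next I would read off the dimension of the complementary data. Any two pure slots $i,j$ force $(m_i-m_j)(Q-P)\sim0$, so $Q$ is one of finitely many torsion points and the bundle $\s{L}_{E}$, hence the entire $E$-aspect, is rigid. By the Compatibility Condition the $\delta$ units by which the vanishing at $P$ falls short become $\delta$ extra units of fixed ramification at $P$ for the $(X_0-E)$-aspect; applying Additivity (Proposition \ref{add}) on the genus $g-1$ curve $X_0-E$ then gives that this family has dimension exactly $\rho_{\mathrm{fixed}}=\rho+2-\delta$. Summing over the finitely many tails and torsion choices, the flag-curve family has dimension at most $\rho+2\le2$.

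For the pairwise-coprime statement I would note that three pure slots $i,j,k$ would force $Q-P$ to be torsion of order dividing $\gcd(m_i-m_j,\,m_j-m_k)=1$, i.e. $Q=P$, which is excluded. Hence at most two slots are pure, so $\delta\ge2$ and $\rho_{\mathrm{fixed}}\le\rho\le0$. With $Q$ confined to finitely many torsion points and the $E$-aspect rigid, the whole family of limit series is finite (and empty when $\rho<0$); by upper semicontinuity the locus on the general curve is finite, and its image $W^r_d$ in $\Pic^d_C$ is therefore finite.

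The remaining and hardest point is the bound $\dim\s{G}^r_d(m_i)\le1$ without coprimality, since the flag curve only yields $\le2$. The excess is entirely the all-pure configuration with $\rho=0$, which requires a common factor $\gcd_{i,j}(m_i-m_j)\ge2$ and produces an honest two-dimensional family of \emph{limit} series whose $E$-aspect is rigid. I would rule out its extension to the general curve by the intersection-theoretic argument used in the $r=2$ proof: if $\s{G}^r_d(m_i)$ had dimension $\ge2$, its class in the Grassmann bundle would meet the square of the codimension-one ramification class $\lambda=c\theta+e\sigma_1$ (with $e\ne0$ by the same count of sections, and $\sigma_1$ intersecting positively), so for two general points $R_1,R_2$ there would be a $g^3_d$ ramified at the moving $Q$ and at both $R_1$ and $R_2$; degenerating with $R_1,R_2$ placed on $E$ forces the rigid all-pure $E$-aspect to be ramified at two general points of $E$, which it is not. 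This contradiction is the crux, and it is precisely where the flag curve fails to be ``sufficiently general''\emdash the all-pure limit series exist on $X_0$ but do not extend\emdash so one must leave the intrinsic degeneration and argue with the extrinsic intersection class, exactly as in the general case that the paper leaves open.
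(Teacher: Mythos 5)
Your proposal follows essentially the same route as the paper's proof: degeneration to the flag curve with $Q$ forced onto an elliptic tail (Proposition \ref{Qell}), the weight window of width $r-1=2$ at the node $P$ coming from Propositions \ref{minP} and \ref{maxP}, torsion of $Q-P$ from two ``pure'' divisors, additivity on $X_0-E$ giving a family of dimension $\rho+2-\delta$ for deficiency $\delta$, and the intersection with the fixed-point ramification class $\lambda$ borrowed from the $r=2$ theorem to cut down the all-pure excess. Your purity/deficiency bookkeeping is the paper's case enumeration by common factors of the $m_i-m_j$ in different clothing, and your coprime branch ($\delta=2$ split as $1+1$, the two extra points each determined by linear equivalence, $\rho_{\mathrm{fixed}}=\rho\le0$) matches the paper's argument exactly.

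There is, however, one step that is wrong as stated, and it is precisely the configuration the paper's case analysis is built around: ``two pure slots force the entire $E$-aspect to be rigid'' is false. When the deficiency $2$ is concentrated in a single slot, the aspect $m_0Q+(d-m_0)P,\ m_1Q+(d-m_1)P,\ m_2Q+(d-m_2)P,\ m_3Q+(d-m_3-2)P+R+S$ moves in an honest $1$-parameter family ($R$ free, $S$ determined by linear equivalence) even though $\s{L}_E$ is pinned down\emdash the paper records this explicitly. Your dimension bound survives only because that configuration imposes vanishing $(m_0,m_1,m_2,m_3+2)$ at $P$ on $X_0-E$, leaving a total of $1+\rho\le1\le\rho+2$; but the oversight propagates into your final contradiction, where you assert that the limits of the $\lambda^2$-cut family must be the rigid all-pure aspect. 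A priori they could instead be these moving $\delta=2$ aspects, whose ramification points sweep out a curve in $E$ and can therefore hit one general point $R$\emdash this is exactly why a single general point is inconclusive here. Your two-point version does dispose of them, since a $1$-parameter family of aspects, each with finitely many ramification points (total Pl\"ucker weight $4d$ on the elliptic tail), cannot be ramified at two independently general points of $E$; but you need to say this rather than claim the limit is all-pure, and you should also arrange (or argue) that $R_1,R_2$ specialize to the tail receiving $Q$. Once those points are supplied, your $\lambda^2$ argument is, if anything, more careful than the paper's own, which uses a single general $R$ together with the blanket assertion that there are only finitely many possible $E$-aspects.
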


\begin{proof}
Degenerate the curve to the flag curve $X_0,$and consider the possible vanishing sequences at the node $P$ on $E.$  As in the previous proofs, the vanishing sequence is bounded by $(d-m_i)$ and is allowed to differ from its maximum values by at most $r-1=2.$  We shall consider each possible ramification at $P.$

If all the pairwise differences among the multiplicities share a common factor, then the first possible $E$-aspect is simply $$m_0Q+(d-m_0)P, m_1Q+(d-m_1)P, m_2Q+(d-m_2)P, m_3Q+(d-m_3)P.$$  In this case we have finitely many $E$-aspects and a $2$-parameter family of possible $X_0-E$-aspects.  However, only finitely many of them can deform to the general curve of genus $g$ because otherwise at least finitely many would have to have ramification at a general fixed point $R,$ and in the limit there are only finitely many possible $E$-aspects and therefore only finitely many possible fixed ramification points on $E$.

If at least two of the pairwise differences share a common factor, then we could have an $E$-aspect of the form $$m_0Q +(d-m_0P), m_1Q+(d-m_1)P, m_2Q+(d-m_2)P, m_3Q+(d-m_3-1)P+R$$ for some point $R.$  We have finitely many $E$-aspects and a $1$-parameter family of possible $X_0-E$-aspects.  Or we could have $$m_0Q+(d-m_0)P,  m_1Q+(d-m_1)P, m_2Q+(d-m_2)P, m_3Q+(d-m_3-2)P+R+S,$$ for some effective divisor $R+S$ of degree $2.$  In this case there is a $1$-parameter family of possible $E$-aspects, since $R$ can be chosen arbitrarily and then $S$ is determined, but we are imposing a fixed point with vanishing sequence $(m_0, m_1, m_2, m_3+2)$ on $Y,$ so there are only finitely many $Y$-aspects.  So these cases contribute a $1$ parameter family if the pairwise differences are not relatively prime.

Finally, if all the pairwise differences are relatively prime, then the only option is an $E$-aspect of the form $$m_0Q+(d-m_0)P, m_1Q+(d-m_1)P, m_2Q+(d-m_2-1)P+R, m_3Q+(d-m_3-1)P+S.$$  There are finitely many possible such aspects.  The corresponding $Y$-aspects have vanishing sequence $(m_0, m_1, m_2+1, m_3+1)$ at $P,$ so there are finitely many of them as well.  Hence if the pairwise differences are relatively prime, then there are only finitely many $g^3_d$'s with the specified ramification type.
\end{proof}

If $r\geq 4,$ then we never have all the pairwise differences relatively prime, since at least two of them are even.  However, we can still prove a bound on the dimension.

\begin{thm}[Weak General Bound]\label{WkBd}
The dimension of $\s{G}^r_d(m_i)$ over a general curve of genus $g$ is bounded by $\rho+r-2$ if this number is nonnegative.  Moreover, let $k+1$ be the size of the largest subset of the set of multiplicities $\{m_{i_0}, \cdots, m_{i_k}\}\subseteq\{m_0, \cdots, m_r\}$ whose pairwise differences all share a common factor.  Then the dimension of $\s{G}^r_d(m_0, \cdots, m_r)$ is bounded by $\rho+k-1.$
\end{thm}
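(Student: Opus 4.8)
The plan is to follow the same degeneration strategy that worked for $r=1,2,3$: specialize the general curve to the flag curve $X_0$, locate the ramification point $Q$ on an elliptic tail $E$ (by Proposition~\ref{Qell}), enumerate the finitely many constrained possibilities for the $E$-aspect, and then read off the forced fixed-point ramification at the node $P$ for the complementary curve $X_0-E$. The upper semicontinuity of dimension means any bound established on $X_0$ transfers to the general curve. The whole argument therefore reduces to a careful bookkeeping of how much freedom is available in the $E$-aspect and how much is killed by the induced fixed-ramification condition on $X_0-E$.

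First I would set up the weight analysis from Proposition~\ref{maxP}: the $E$-aspect has vanishing sequence at $P$ bounded above by $(d-m_{r-i})$, and its weight can exceed the minimal weight $r(g-1)$ by at most $\rho+r-1$. Concretely, writing the $E$-aspect as generated by divisors $m_iQ+(d-m_i-\epsilon_i)P+(\text{correction of degree }\epsilon_i)$, the total ``slack'' $\sum_i \epsilon_i$ is at most $\rho+r-1$. When $\rho\le 0$ this slack is at most $r-1$, so at most $r-1$ of the $r+1$ basis divisors can deviate from the pure $m_iQ+(d-m_i)P$ form; equivalently at least two indices are forced to be pure. Each unit of slack $\epsilon_i$ simultaneously (a) frees up one parameter in the choice of the corrective divisor on $E$, and (b) raises the induced multiplicity at $P$ by one, hence tightens the fixed-ramification condition on $X_0-E$ by one via the Additivity formula (Proposition~\ref{add}). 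The key accounting identity is that these two effects nearly cancel: the net dimension contribution of the $X_0-E$ side, after subtracting the fixed ramification, is $\rho$ minus the slack already spent, while the $E$ side contributes at most the slack. I would then argue, as in the $r=2,3$ cases, that the configurations achieving positive net dimension do not in fact deform to the general curve, because the forced ramification at a movable general point $R$ on $E$ is impossible (the $E$-aspect has only finitely many ramification points). This reduces the effective bound from $\rho+r-1$ to $\rho+r-2$.

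For the refined bound involving $k$, the observation is that a corrective divisor of the form $m_iQ+(d-m_i-\epsilon)P+(\text{effective of degree }\epsilon)$ can be \emph{linearly equivalent on }$E$ to a pure divisor $m_jQ+(d-m_j)P$ only when $(m_i-m_j)(Q-P)$ is torsion \emph{and} the corrective divisor collapses onto $P$ and $Q$; the genuinely free corrections (those contributing dimension to the $E$-aspect) arise precisely when several of the differences $m_i-m_j$ share a common factor, allowing the divisor classes $(m_i-m_j)(Q-P)$ to coincide for a fixed torsion point $Q-P$. If the largest such mutually-compatible subset has size $k+1$, then at most $k-1$ independent free parameters can appear in the $E$-aspect beyond the two forced pure divisors, yielding the bound $\rho+k-1$. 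I would establish this by showing that any basis divisor outside the compatible subset must be pure (contributing no freedom and no slack), so the slack is confined to the $k+1$ compatible multiplicities.

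\textbf{The main obstacle} I anticipate is the second, combinatorial claim: rigorously controlling the interaction between the torsion condition $(m_i-m_j)(Q-P)\sim 0$ on $E$ and the linear-equivalence collapsing of corrective divisors, so as to prove that genuine moduli in the $E$-aspect can only accumulate among multiplicities sharing a common factor. The $r\le 3$ proofs handled the few cases by explicit enumeration, but for general $r$ one must argue uniformly that a maximal ``free'' family of $E$-aspects has dimension exactly the size (minus two) of the largest common-factor clique, and simultaneously verify that the deformation-to-the-general-curve argument (intersecting with the fixed-point ramification class $\lambda$ in the Grassmann bundle, as in the $r=2$ proof) still forces the extra drop by one. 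Making the bookkeeping of slack-versus-freedom airtight across all ramification profiles, rather than case-by-case, is where the real work lies.
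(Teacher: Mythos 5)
Your overall skeleton is the paper's: degenerate to the flag curve, place $Q$ on an elliptic tail $E$, use the weight window $r(g-1)\le w(V_E,P)\le r(g-1)+\rho+r-1$ from Propositions \ref{minP} and \ref{maxP}, split the slack between moduli of $E$-aspects and fixed-ramification moduli of $(X_0-E)$-aspects via Proposition \ref{add}, and invoke the fixed-general-point deformation argument from the $r=2$ proof. But the combinatorial core of the refined bound is inverted in your proposal. You claim that basis divisors \emph{outside} the maximal common-factor clique must be pure and that the slack is confined to the $k+1$ clique indices; the truth, and the paper's argument, is exactly the reverse. Since $Q-P$ is torsion of some order $n\ge 2$ on $E$, two pure divisors satisfy $m_iQ+(d-m_i)P\sim m_jQ+(d-m_j)P$ only if $n\mid(m_i-m_j)$; hence the set of indices that can remain pure is itself a clique whose pairwise differences share the common factor $n$, so it has size at most $k+1$, and the remaining at least $r-k$ indices are \emph{forced} to drop by at least $1$ each because $Q\neq P$. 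These forced drops consume $r-k$ units of the weight budget $\rho+r-1$, leaving exactly $\rho+k-1$ to distribute: $t$ parameters in the correction divisors $D_i$ on $E$ (total degree $t+r-k$, one point of each $D_i$ being determined by linear equivalence) against a $(\rho+k-1-t)$-parameter family of $(X_0-E)$-aspects with the augmented fixed vanishing sequence $(m_0,\cdots,m_k,m_{k+1}+d_{k+1},\cdots,m_r+d_r)$ at $P$. As you state it, a pure divisor at an index outside the clique could not even be linearly equivalent to the clique's pure divisors, so ``slack confined to the clique'' is false and does not produce the $\rho+k-1$ count; you yourself flag this step as the main obstacle and leave it unresolved.

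There is a second, subtler gap in your route to the $\rho+r-2$ bound. You propose to shave the last dimension off \emph{every} configuration by intersecting with the fixed-point ramification class at a general $R$; but that argument requires the $E$-aspect to be rigid, so that it has only finitely many ramification points and cannot be ramified at a general $R\in E$. This holds only in the extremal all-pure case ($k=r$, $t=0$). For $t\ge 1$ the $E$-aspects move in a positive-dimensional family and their ramification points can sweep out $E$, so the deformation argument fails there; the paper instead observes that when $k=r$ a degree-$t$ correction divisor on $E$ contributes only $t-1$ moduli (one point being determined by the linear equivalence), which yields $\rho+r-2$ directly for $t\ge 1$ and reserves the deformation argument for $t=0$ alone. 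Two smaller omissions: the case where $Q$ limits to a rational component (dispatched in the paper by Proposition \ref{add}, giving dimension $\rho-1$), and your claim that at least two divisors are forced pure, which presumes $\rho\le 0$ while the theorem allows $\rho+r-2\ge 0$ with $\rho>0$.
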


\begin{proof}
As before, if we degenerate the curve to a flag curve.  Since all points are general on a rational curve, if the limit of the ramification point $Q$ lies on a rational component, then by the additivity theorem the dimension of $\s{G}^r_d$ is just the fixed-point ramification number $\rho-1.$  So suppose that the limit of $Q$ on $X_0$ lies on one of the elliptic tails.  Then it is in fact a torsion point.  We have the upper and lower bounds $$r(g-1) \leq w(V_{E},P)\leq r(g-1)+\rho+r-1.$$  

The multiplicities of $V_E$ at $P$ are allowed to be equal to their maximum values at the $k+1$ places whose pairwise differences have a common factor.  The multiplicities at the other $r-k$ places are required to drop by $1$ because $Q \neq P.$  So the difference between the actual lower and upper bounds on $w(V_{E}, P)$ is $\rho+k-1.$  If $\rho+k-1<0,$ then there are no possible $g^r_d$'s.  Assuming this difference is nonnegative, we can distribute it between $E$ and $X_0-E.$

Let $t$ be any integer between $0$ and $\rho+k-1.$  Then we can construct an $E$-aspect of the form $$m_0Q+(d-m_0)P, \cdots, m_kQ+(d-m_k)P, m_{k+1}Q+(d-m_{k+1}-1-t)P+D_{k+1},$$ $$ m_{k+2}Q+(d-m_{k+2}-1)Q+D_{k+1}, \cdots, m_rQ+(d-m_r-1)P+D_r,$$ where the $D_i$ are effective divisors of degree $d_i$ whose sum is $t+r-k.$  There is a $t$-parameter family of such aspects.  The corresponding $(X_0-E)$-aspects must have multiplicity sequence $$(m_0, \cdots, m_k, m_{k+1}+d_{k+1}, m_{k+2}+d_{k+2}, \cdots, m_r+d_r)$$  There is a $(\rho+k-1-t)$-parameter family of such $(X_0-E)$-aspects. Thus in every case, there is a $(\rho+k-1)$-parameter family of pairs of an $E$-aspect with a $X_0-E$-aspect.

However, in case $k=r,$ if all the pairwise differences have a common factor, the bound is only $\rho+r-2$ if this is nonnegative.  The reason is that if we subtract $t$ from $w(V_E, P),$ we only gain a $(t-1)$-parameter family because one point is determined by the others, and it is not possible to have $m_0Q+(d-m_0)P, \cdots, m_rQ+(d-m_r)P$ on $E$ and a $(\rho+r-1)$-dimensional family on $X_0-E$ because the resulting $g^r_d$'s would not be ramified at a general fixed point $R$ on $E.$
\end{proof}

\section{Plane Curves}\label{PC}

In the previous section we proved a finiteness condition for $r=2$ but could not prove the full dimensionality.  In this section we use the special extrinsic properties of plane curves to provide an ad hoc proof of the missing non-existence case when $r=2.$

\begin{thm}\label{r2}
Given nonnegative integers $g,$ $d,$ $(m_0>m_1>m_2),$ a general curve $C$ of genus $g$ admits at most a $\rho(g, 2, d, m_0, m_1, m_2)$-dimensional family of $g^2_d$'s with a ramification point of type $(m_0, m_1, m_2).$
\end{thm}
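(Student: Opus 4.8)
The plan is to set aside the degeneration machinery of Section~\ref{LLS}, which the flag-curve computation for $r=2$ has already shown to be insufficiently general, and to argue instead with the image plane curves directly. First I would reduce to a clean extrinsic picture: after removing the base divisor, a $g^2_d$ with vanishing sequence $(m_0,m_1,m_2)$ at $Q$ realizes $C$ as the normalization of a plane curve $\Gamma$ of degree $d'=d-m_2$, and I would check that for a general member of the family $\phi$ is birational onto $\Gamma$ with $\delta=\frac12(d'-1)(d'-2)-g$ nodes as its only singularities away from $Q'=\phi(Q)$. The local data at $Q$ then translate into a prescribed singularity of $\Gamma$ at $Q'$: the image branch has multiplicity $b=m_1-m_2$ and contact order $a=m_0-m_2$ with its tangent cone, which unwinds under successive blow-ups into an explicit chain of ordinary and infinitely near point conditions, each of the type catalogued in Section~\ref{Prel}.

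Second I would set up the dimension count. Let $\Sigma$ denote the family of marked plane curves $(\Gamma,Q')$ of this type inside $\bb{P}^N$, where $N=\frac12 d'(d'+3)$. Passing to normalizations gives a morphism $\Sigma\to\s{M}_{g,1}$ whose fibers are the $\mathrm{PGL}_3$-orbits, of dimension $8$, since two plane models of the same pair $(C,g^2_d)$ differ by an automorphism of $\bb{P}^2$ and the $g^2_d$ is recovered from $\phi^*\s{O}(1)$ together with the base divisor. Hence the locus of triples $(C,g^2_d,Q)$ realizing the ramification has dimension $\dim\Sigma-8$, and the family over a general curve has dimension $\dim\Sigma-8-(3g-3)$, provided the composite $\Sigma\to\s{M}_g$ dominates; if it fails to dominate, a general curve carries no such $g^2_d$ at all, which is exactly the non-existence statement.

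Third I would compute $\dim\Sigma$ using the principle, recorded in Section~\ref{Prel}, that point conditions---ordinary, infinitely near, and the node conditions cutting out the Severi locus---impose independent conditions up to codimension $N$. Starting from the Severi dimension $3d'+g-1$ and subtracting the codimension of the contact conditions at the movable point $Q'$ (with the single parameter for moving $Q'$ already folded in), the count should collapse to $\dim\Sigma-8-(3g-3)=\rho$, so that $\dim\Sigma-8<3g-3$ holds precisely when $\rho<0$. I would anchor the bookkeeping with the smooth-branch case $m_1=m_2+1$: there $Q'$ is an ordinary flex of contact $a=m_0-m_2$, imposing codimension $a-3$, and indeed $3d'+g-1-(a-3)-8-(3g-3)=3d-2g-3-m_0-2m_2=\rho$; the multiplicity-$b$ cases follow the same template, with the genus drop at $Q'$ and the infinitely-near contributions tracked through the blow-up chain.

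The main obstacle is exactly the independence invoked in the third step. The node conditions defining the Severi variety are carried by \emph{movable} points, so additivity of codimension with the \emph{fixed} contact conditions at $Q'$ is not automatic; one must verify that these two families of conditions meet transversally for a general configuration, and, crucially, that when the prescribed codimension would exceed $N$ no coincidence rescues existence. This last point is the whole force of the argument: in the regime $\rho<0$ the plane-curve family falls strictly below the degrees of freedom supplied by the moduli of genus-$g$ curves, the automorphisms of $\bb{P}^2$, the movable ramification point, and the nodes, so $\Sigma$ cannot surject onto $\s{M}_g$. A secondary but necessary obstacle is the reduction step itself: I must rule out that non-birational maps, extra base points, or worse singularities at $Q'$ produce a \emph{larger} family than the generic birational-with-nodes model, so that the computed bound genuinely dominates every component and the theorem's \textbf{at most} $\rho$-dimensional conclusion holds.
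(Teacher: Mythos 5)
Your proposal is essentially the paper's own argument in Section \ref{PC}: reduce to a base-point-free birational plane model of degree $d-m_2$ with nodes, bound the marked family starting from the Severi dimension $3(d-m_2)+g-1$ by imposing the ordinary and infinitely near point conditions coming from the singularity at $\phi(Q)$, and compare the total against the $8$ dimensions of linear coordinate changes plus the $3g-3$ of $\s{M}_g$, with failure of dominance giving non-existence when $\rho<0$. The obstacles you flag are exactly the steps the paper supplies\emdash non-birational factorizations are excluded by a Hurwitz-type dimension count of maps through intermediate curves of positive genus and through $\bb{P}^1$, independence of actual and virtual point conditions is the principle recorded in Section \ref{Prel}, and your ``same template'' for branches of higher multiplicity is carried out in the paper by running the Euclidean algorithm on the multiplicity pair and telescoping the blow-up chain, yielding a net codimension $m_0+m_1-2m_2-4$ in both the coprime and the common-factor cases (in the latter the final infinitely near point moves freely on the exceptional divisor, precisely the configuration that defeated the flag-curve degeneration, yet the count is unchanged).
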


\begin{proof}

We have proved all cases except $\rho=0.$  Consider the case $\rho=0.$

If $m_2 > 0,$ then a $g^2_d$ with a point of type $(m_0, m_1, m_2)$ is equivalent to a $g^2_{d-m_2}$ with a point $Q$ of type $(m_0-m_2, m_1-m_2, 0),$ by subtracting off the basepoint.  This does not change $\rho.$  So it is sufficient to consider basepoint-free $g^2_d$'s with a point of type $(m_0, m_1, 0).$  Such $g^2_d$'s give rise to maps $\phi$ from $C$ to $\bb{P}^2.$  Consider the possible images of $C$ in $\bb{P}^2.$  

We first show that a general map $C \rar \bb{P}^2$ is the normalization map.  Suppose that the map factors as $\phi_1 \: C \rar C'$ and $\phi_2\: C' \rar \bb{P}^2,$ where $\phi_2$ is the normalization map.  By Hurwitz's Theorem, the degree of the ramification divisor of $\phi_1$ is $R=2g-2-(\deg \phi_1)(2g'-2).$ where $R$ is the degree of the ramification divisor.  A map of curves is determined by its ramification points.  So the dimension of the family of possible $\phi_1$ is $3g'-3+R=3g'-3+\deg(\phi_1)(2g'-2)+(2g-2) \leq (2g-2)<3g-3,$ assuming $g' \geq 2.$  If $g'=1,$ the dimension is $1+2g-2 \leq 3g-3.$  Since there are finitely many possible degrees and finitely many possible dimensions, the family of all curves mapping to a curve of genus between $1$ and $g$ is a proper subvariety of $\?{M_g}.$  So a general curve does not have such a map.  A general curve does map to $\bb{P}^1,$ so we have to check the dimension of possible factorizations $\phi_1\: C \rar \bb{P}^1$ and $\phi_2\: \bb{P}^1\rar \bb{P}^2.$  Suppose that $\phi_1$ has degree $d_1$ and vanishing orders $(0,c)$ at $Q,$ and $\phi_2$ has degree $\frac{d}{d_1}$ and vanishing orders $(0, \frac{m_1}{c}, \frac{m_2}{c}).$  For $\phi_1$ to exist, we need $\rho_1=g-2(g+1-d_1)-c+2=2d-g-c$ to be nonnegative.  So $d_2 \leq \frac{2d}{g+c}.$  To count the possible $\phi_2$'s, we have $\rho_2=0-3(2-d_2)-\sum(\frac{m_i}{c}-i) +1=3d_2-5-\sum(\frac{m_i}{c}-i) \leq \frac{6d}{g+c}-5-\sum(\frac{m_i}{c}-i),$ since $d_2 \leq \frac{6d}{g+c}.$  So $\rho-\rho_2=d(3-\frac{6}{g+c}-\sum(m_i-i)(1-\frac{1}{c}).$  No section can vanish to order higher than $d,$ and the map basepoint-free, so one section vanishes to order zero, so $\sum(m_i-i)\leq 2d-1.$  Hence $\rho-\rho_2 \geq d(1-\frac{6}{g+c}+\frac{2}{c}.$  If $g>1,$ this number is always positive.  So $\rho>\rho_2.$  Thus it is enough to assume that $\phi$ is the normalization map to a degree-$d$ plane curve of genus $g.$

Without any ramification, a degree-$d$ curve of genus $g$ has exactly $\frac{1}{2}(d-1)(d-2)-g$ nodes.  Choose each node to lie anywhere in $\bb{P}^2,$ gaining $2$ degrees of freedom, but the node is a double point that imposes $3$ conditions.  Each node represents a net loss of $1$ in the dimension.  Since the dimension of all degree-$d$ curves is $\frac{1}{2}(d)(d+3),$ and a set of up to $\frac{1}{2}(d)(d+3)$ point conditions always imposes independent conditions, the dimension of degree-$d$ curves of genus $g$ is thus $3d-1+g.$ Hence $g=\rho+3(g+2-d)+m_0+m_1-2.$  Hence the dimension is bounded above by $3d-1+\rho + 3g+6-3d+1+m_0+m_1-2$ or $3g+\rho+m_0+m_1.$ 

Since the dimension of $\s{M}_g$ is $3g-3,$ the ramification-free Brill-Noether number $g-3(g+2-d)$ is bounded by $\rho+m_0+m_1-4,$ and the linear changes of coordinate bases form an $8$-dimensional family of image curves for each $g^2_d$ on an abstract curve, this is consistent with the ramification-free Brill-Noether theorem: without ramification, the classical Brill-Noether number would be $\rho+m_0+m_1-4.$  The total space has dimension at most $3g+\rho + m_0+m_1,$ so it is impossible for every curve of genus $g$ to admit more than a $\rho+m_0+m_1-4$-dimensional family of $g^2_d$'s.

Next, impose the required ramification at $Q$ and watch how the dimension changes. 

The images must have an $m_1$-fold point at $\phi(Q).$  In local coordinates, the map $\phi\: C \rar \bb{P}^2$ looks like $t \rar (a_0 t^{m_0}+a_1 t^{m_0+1} +\cdots, b_0 t^{m_1}+b_1t^{m_1+1}+ \cdots).$  Apply resolution of singularities.  Let $m_0=q_1m_1+r_1.$  By successively blowing up $\phi(Q),$ we obtain a sequence of $q_1$ points where $\phi(Q)$ lifts to an $m_1$-fold point.  At the next blowup, it lifts to an $r_1$-fold point $(a_0 t^{m+1}+a_1 t^{m_1+1} +\cdots, b_0 t^{r_1}+b_1t^{r_1+1}+ \cdots).$  If $m_1=q_2r_1+r_2,$ we get $q_2$ $r_1$-fold points followed by an $r_2$-fold point.  At the last step of the resolution, if the greatest common divisor of $(m_0, m_1)$ is $1,$ the last blowup gives us an inflection point of type $(1,r_k).$  This resolves into $r_k-1$ successive fixed inflection points before we finally hit a simple point on the curve, transverse to the exceptional divisor.  If the greatest common divisor is not $1,$ we end up with an $r_k$-fold point of type $(r_k,r_k),$ so the map looks like $(a_0t^{r_k}+\cdots, t^{r_k}+\cdots)$ in coordinates.  Blowing this up, the map becomes $(a_0+a_1t+\cdots, b_0t^{r_k}+b_1t^{r_k+1}+\cdots).$  We can change coordinates to obtain an inflection point $(a_1t+\cdots, b_0t^{r_k}+\cdots)$ of multiplicity type $(1,r_k),$ almost as if we were blowing up a point of type $(r_k-1, r_k)$, but now we don't know where on this exceptional divisor the point lies!  We have reintroduced one extra degree of freedom.

Each new virtual $r$-fold point drops the genus by $\frac{1}{2} r(r-1),$ freeing up that many nodes, but it also imposes $\frac{1}{2} r(r+1)$ conditions.  So a virtual $r$-fold point is a net loss of $r$ dimensions.  Each virtual inflection point imposes a condition but leaves the genus alone, for a net loss of $1$ dimension.

So, by requiring the existence of a ramification point $Q,$ we gain two dimensions for the image of $Q$ itself, which is free to move in $\bb{P}^2,$ but if $gcd(m_0, m_1)=1,$ we lose $q_1(m_1)+q_2(r_1)+\cdots +q_k(r_k)+r_k-1.$  The telescoping sum can be rewritten as $(m_0-r_1)+(m_1-r_2)+ \cdots +(r_{k-2}-r_k)+(r_{k-1}-1)+r_k-1 =m+0+m_1-2.$  So we have lost $m_0+m_1-4$ dimensions.  If $gcd(m_0, m_1)\neq 1,$ then we obtain the telescoping sum $(m_0-r_1)+(m_1-r_2)+ \cdots +(r_{k-2}-r_k)+(r_{k-1}-0)+r_k-2=m_0+m_1-2,$ so again the dimension drops by $m_0+m_1-4,$ to a total dimension of $\rho+ 3g+4.$  Once again, it is impossible to have a $3g-3$-parameter family of fibers all of dimension at least $\rho+8$, when the total space dimension is down to $\rho+3g+4.$
\end{proof}

This proof does not generalize to higher dimensions; the genus of a curve in $\bb{P}^n$ is not determined by its singular points and their blowups.  But it has the advantage of generalizing to multiple (fixed or moving) ramification points. It also shows that there are counterexamples on the flag curve that do not deform to the general curve, so the existence of counterexamples on the flag curve in higher dimensions should also not be seen as strong evidence against the dimensionality conjecture for the general curve.  It also shows that although the flag curves are "Brill-Noether general" for $g^r_d$'s without ramification and with fixed ramification points, they are not sufficiently general when movable ramification points are imposed.  This suggests that the Brill-Noether loci on the moduli spaces $\overline{M_g}$ for these ramification conditions may well be different from the known classical Brill-Noether loci and the loci for fixed general ramification points. Some of these loci will be the subject of a future paper.


\begin{thebibliography}{9}


\bibitem{ACGH} Arbarello, E., Cornalba, M., Griffiths, P., and Harris, J. \emph{Geometry of Algebraic Curves,} Springer Verlag (1985).

\bibitem{B-N} Brill, A. and Noether, M. ``\"Uber die Algebr\"aischen Funktionen und ihre Anwendungen in der Geometrie,'' Math. Ann. 7, 269-310 (1879).

\bibitem{EH-GP} Eisenbud, D. and Harris, J. ``A simpler proof of the Gieseker-Petri Theorem on Special Divisors,'' Invent. Math. 74, 269-280 (1983).

\bibitem{EH-BN} Eisenbud, D. and Harris, J. ``Divisors on general curves and cuspidal rational curves,'' Invent. Math. 74, 371-418 (1983).

\bibitem{EH-LS} Eisenbud, D. and Harris, J. ``Limit Linear Series: Basic Theory,'' Invent. Math. 85, 337-371 (1986).

\bibitem{EH-Mg} Eisenbud, D. and Harris, J. ``The Kodaira dimension of the moduli space of curves of genus $\geq 23$,''
Invent. Math. 90, no. 2, 359--387 (1987). 

\bibitem{FP1} Farkas, G. and Popa, M. ``Effective divisors on $\overline{\s{M}_g}$, curves on $K3$ surfaces, and the slope conjecture.''  J. Algebraic Geom.  14,  no. 2, 241--267 (2005). 

\bibitem{FP2} Farkas, G. and Popa, M. ``Effective divisors on ${\s{M}_g}$ and a counterexample to the Slope Conjecture'' preprint, available at www.ma.utexas.edu/~gfarkas  

\bibitem{F-GT} Farkas, G. ``$M_{22}$ is of general type'' preprint, available at www.ma.utexas.edu/~gfarkas

\bibitem{FP-VB} Farkas, G. and Popa, M. ``Notes on limit linear series for vector bundles on curves,'' preprint, available at www.ma.utexas.edu/~gfarkas

\bibitem{F-D} Fulton, W. ``Flags, Schubert polynomials, degeneracy loci and determinantal formulas,'' Duke Math J. 65, 381-420 (1992).

\bibitem{F-IT} Fulton, W. \emph{Intersection Theory,} Ergebnisse der Mathematik und Ihre Grenzgebiet (1984).

\bibitem{F-H} Fulton, W. and Harris, J. \emph{Representation Theory: A First Course} Springer Verlag (1991).

\bibitem{GH-AG} Griffiths, P. and Harris, J. \emph{Principles of Algebraic Geometry,} Wiley (1978).

\bibitem{GH-BN} Griffiths, P. and Harris, J. ``On the variety of special linear systems on an algebraic curve,'' Duke Math J. 47, 233-272 (1980).

\bibitem{HM} Harris, J. and Mumford, D. ``On the Kodaira dimension of the moduli space of curves. With an appendix by William Fulton,''
Invent. Math. 6, no. 1, 23--88 (1982).

\bibitem{Kf} Kempf, G. \emph{Schubert Methods with an Application to Algebraic Curves,} Publications of Math. Centrum, Amsterdam (1971).

\bibitem{Kl} Kleiman, S. ``$r$-Special Subschemes and an Argument of Severi's (with an Appendix by Dan Laksov),'' Adv. Math. 22, 1-31 (1976).

\bibitem{KL-S} Kleiman, S. and Laksov, D. ``On the existence of special divisors,'' Amer. J. Math. 94, 431--436 (1972). 

\bibitem{KL-Sc} Kleiman, S. and Laksov, D. ``Schubert Calculus,'' Amer. Math. Monthly, Vol. 79, No. 10, 1061-1082 (1972).

\bibitem{KL-C} Kleiman, S. and Laksov, D. ``Another proof of the existence of special divisors,''Acta Math. 132, 163--176 (1974).

\bibitem{Sch1} Schubert, D. ``Linear series with cusps and $n$-fold points,''  Trans. Amer. Math. Soc.  304 no. 2, 689--703 (1987).

\bibitem{Sch2} Schubert, D. ``Linear series with an $N$-fold point on a general curve,'' Trans. Amer. Math. Soc.  327  no. 1, 117--124 (1991).

\bibitem{T1} Teixidor i Bigas, M. ``Brill-Noether Theory for Stable vector bundles,'' Duke Math J 62 N. 2, 385-400 (1991).

\end{thebibliography}
\end{document}